\newtheorem{theorem}{Theorem}[section]
\newtheorem{proposition}[theorem]{Proposition}
\newtheorem{lemma}[theorem]{Lemma}
\newtheorem{definition1}[theorem]{Definition}
\newtheorem{remark1}[theorem]{Remark}
\DeclarePairedDelimiter\floor{\lfloor}{\rfloor}
\title{Quenched limit for diffusive biased random walks in random environment}
\author{C.\ Scali\footnote{School of Computation, Information and Technology, Technische Universität München, carlo.scali@tum.de}}
\begin{document}

\maketitle

\begin{abstract}
	We prove that every directionally transient random walk in random i.i.d.\ environment, under condition $(T)_{\gamma}$, which admits an annealed functional limit towards Brownian motion also admits the corresponding quenched limit in $d \ge 2$. We exploit a classical strategy that was introduced by Bolthausen and Sznitman but, with respect to the existing literature, we get almost-optimal bounds on the variance of the quenched expectation of certain functionals of the random walk.
\noindent

\textbf{MSC2020:} Primary 60K37; 
secondary
60G50, 
\\
\textbf{Keywords and phrases:} random walk in random environment, disordered media.
\end{abstract}

\section{Introduction}

Random walks in random i.i.d.\ environment (RWRE) are nowadays considered a classical model in probability theory. Their study started more than 50 years ago and has been extensive since. We mention the groundbreaking work of \cite{Solomon} that highlighted that the behaviour of RWRE can be strongly atypical with respect to the classical, symmetric or asymmetric, random walks. Indeed, lots of phenomena with mathematical and physical interest as diffusivity, trapping and aging have been explored. In the present work, we analyse the quenched diffusive behaviour of directionally transient walks in $d \ge 2$.

\subsection{Model and main result}

Let $\{\omega_x\}_{x \in \mathbb{Z}^d}$ be a family of i.i.d.\ random variables supported on $\Omega\coloneqq \mathcal{P}^{\mathbb{Z}^d}$, where $\mathcal{P} \coloneqq \{(p(x))_{x \in \mathbb{Z}^d} \colon p(x) \ge 0, \sum_{x} p(x) = 1\}$. Let $\mathbf{P}$ be the probability measure that generates $\omega \in \Omega$, let $\mathbf{E}$ be the corresponding expectation and $\textbf{Var}$ the variance. We define the quenched law of the random walk $\{X_n\}_{n \in \mathbb{N}_0}$, with $X_0 = x$ almost surely, by setting the one-step jump probabilities to be
\begin{equation*}
	P_{x}^{\omega}\left(X_{n+1} = z \mid X_n = y\right) = \omega_{y}(z).
\end{equation*}
We define the annealed law of the walk $\mathbb{P}_x(\cdot) \coloneqq \mathbf{E}[P_x^{\omega}(\cdot)]$. We will denote with $E^\omega_x$ (resp.\ $\mathbb{E}_x$) to be the expectations w.r.t.\ the quenched (resp.\ annealed) law.

Let us fix a unit vector $v^* \in \mathbb{S}^{d-1}$, we say that $X_n$ is transient in the direction $v^*$ if 
\begin{equation*}
	\mathbb{P}_0\left(\lim_{n \to \infty} X_n \cdot v^* = \infty\right) = 1.
\end{equation*}
It is a well-known fact that if the random walk is directionally transient one can define a regeneration structure. We will denote by $\{\tau_i\}_{i \ge 0}$ the regeneration times that we will define formally in Section~\ref{sec:RegTimes} (with the convention $\tau_0=0$). We also write $\tilde{\tau}_i\coloneqq \tau_{i} - \tau_{i-1}$ for all $i \in \mathbb{N}$.

We retain the assumptions of \cite{BouchetSabotdosSantos}, let us state them:
\begin{description}
	\item[Condition (S)] The walk has bounded steps, i.e.\ $\mathbb{P}(\omega_0(z) = 0) = 1$ for all $z \in \mathbb{Z}^d \colon \|x\| > r_0$ for some universal $r_0 \ge 1$.
	\item[Condition (R)] Non collinearity, i.e.\ the set $\mathcal{J}\coloneqq \{z \colon \mathbf{E}[\omega_{0}(z)]>0\}$ satisfies $\mathcal{J}\not \subseteq \mathbb{R}u$ for any $u \in \mathbb{Z}^d$. Furthermore, $\mathbf{P}(\exists z \colon \omega_{0, 0} + \omega_{0, z} = 1) < 1$. Note that this assumption is very mild as it only requires the RWRE not to be effectively one dimensional.
	\item[Condition $(T)_\gamma$] The walk is transient in the direction $v^* \neq 0$. For some $\gamma \in (0, 1]$ and $c>0$ we have that
	\begin{equation}\label{eqn:ConditionTGamma}
		\mathbb{E}_0 \left[\exp\left(c \sup_{0 \le k \le \tau_1} \|X_k\|^\gamma\right)\right] < \infty.
	\end{equation}
\end{description}
If one further assumes $\mathbb{E}_{0}[\tau_2-\tau_1] < \infty$, by \cite{Sznitman_Zerner, SznitmanEffective} we get that $\mathbb{P}_0$-a.s.\
\begin{equation}\label{eqn:DefDirection}
	\lim_{n \to \infty} \frac{X_n}{n} = v_0 \neq 0,
\end{equation}
for a deterministic $v_0$. We will call $v \coloneqq v_0/\|v_0\|$ the asymptotic direction. Note that the asymptotic direction could be well defined (with a different formula) also beyond the ballistic regime. Let us define the sequence of processes $(B^{(n)}_{t})_{t \ge 0}$ as
\begin{equation}\label{eqn:ScaledProcess}
	B^{(n)}_{t} \coloneqq \frac{X_{\floor{nt}} - v_0 \floor{nt}}{\sqrt{n}}, \quad t \ge 0.
\end{equation}
We often restrict the time horizon of the process to a compact interval $[0, T]$ for an arbitrary $T \in \mathbb{R}_+$. We are now in position to state the main theorem.

\begin{theorem}\label{theo:Main}
	Let us consider a random walk in random environment in dimension $d\ge2$ under conditions (S), (R) and $(T)_\gamma$. Suppose that $\mathbb{E}_0[(\tau_2-\tau_1)^2]<\infty$, then for almost every $\omega \in \Omega$, the sequence of processes $B^{(n)}_{t}$ converges in distribution under $P_0^{\omega}$ towards a centred Brownian motion with a deterministic non-degenerate covariance matrix. The convergence happens in Skorohod's $J_1$-topology on c\`{a}dl\`{a}g functions.
\end{theorem}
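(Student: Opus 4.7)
The plan is to follow the Bolthausen--Sznitman scheme for upgrading an annealed CLT to a quenched one, splitting the proof into (i) the annealed functional CLT and (ii) a quenched--annealed comparison that reduces to a variance bound.

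For (i), under the stated hypotheses the inter-regeneration spatial increments $X_{\tau_i}-X_{\tau_{i-1}}$ are i.i.d.\ for $i\ge 2$ with square-integrable time lengths $\tilde\tau_i$, and a classical Donsker-type argument yields the annealed $J_1$-convergence $B^{(n)}\Rightarrow B^\infty$ to a centred Brownian motion with deterministic non-degenerate covariance. For (ii), I fix a bounded Lipschitz functional $F$ on $D([0,T],\mathbb{R}^d)$. Since $\mathbf{E}[E_0^\omega[F(B^{(n)})]]=\mathbb{E}_0[F(B^{(n)})]$ converges by the annealed CLT, the quenched statement follows once I show $\mathbf{Var}(E_0^\omega[F(B^{(n)})])\to 0$ rapidly enough to apply Borel--Cantelli along a polynomial subsequence $n_k$, after which annealed tightness combined with a maximal inequality bridges consecutive $n_k$ and yields the full quenched functional CLT.

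The core of the argument is therefore the variance estimate. Writing
$$\mathbf{Var}\!\left(E_0^\omega[F(B^{(n)})]\right) = \mathbf{E}\!\left[E_0^\omega[F(B^{(n)})]\,E_0^\omega[F(\tilde B^{(n)})]\right] - \mathbb{E}_0[F(B^{(n)})]^2,$$
where $\tilde B^{(n)}$ is an independent copy of $B^{(n)}$ sampled in the same environment $\omega$, one compares this pair to two walks evolving in independent environments. The discrepancy is driven by regeneration slabs whose spatial supports overlap, and controlling such overlaps reduces to an intersection estimate for two drifted trajectories whose transverse fluctuations are of order $\sqrt{n}$ in the $(d-1)$ directions orthogonal to $v_0$, and whose slabs have random but well-controlled spatial size thanks to the stretched-exponential bound of $(T)_\gamma$ on $\sup_{0\le k\le\tau_1}\|X_k\|^\gamma$.

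The main obstacle is obtaining this intersection/variance bound with a polynomial rate sharp enough to be summable already in the critical dimension $d=2$. This is where the paper's improvement over prior work presumably lies: rather than cutting the walk into deterministic-length blocks, which wastes moments and forces strictly more than the second moment of $\tilde\tau_1$, one must work with the random regeneration partition itself, stratify slabs by their spatial diameter, and exploit the stretched-exponential control from $(T)_\gamma$ to discard atypical slabs. Once such a near-optimal variance bound is obtained under only the minimal hypothesis $\mathbb{E}_0[(\tau_2-\tau_1)^2]<\infty$, the Borel--Cantelli and interpolation steps are routine.
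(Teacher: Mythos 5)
Your high-level architecture matches the paper's: invoke the annealed functional CLT, reduce the quenched statement (via Bolthausen--Sznitman, here cited as \cite[Lemma~4.1]{Szn_serie}) to a polynomial variance bound $\mathbf{Var}(E_0^\omega[F(W^{(n)})])\le Cn^{-c}$, and control that variance by comparing two walks in a common environment. The paper similarly first replaces $B^{(n)}$ by a time-shifted version $\bar B^{(n)}$ started at $\tau_1$ and works with a polygonal interpolation $W^{(n)}$, so your framing is consistent up to these technical simplifications.

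The gap is in the variance estimate, which you acknowledge is the heart of the matter but describe only as ``stratify slabs by their spatial diameter and exploit the stretched-exponential control from $(T)_\gamma$ to discard atypical slabs.'' That recipe is essentially what the prior literature already did, and it is not sufficient to reach $\mathbf{Var}\le Cn^{-c}$ under only $\mathbb{E}_0[\tilde\tau_2^2]<\infty$; the earlier result of \cite{BouchetSabotdosSantos} carried out exactly such spatial truncations using $(T)_\gamma$ and still required the extra moment $\mathbb{E}_0[\tilde\tau_2^2\log(\tilde\tau_2)^p]<\infty$. The difficulty your outline does not address is that the event that a regeneration slab carries an intersection of the two walks is \emph{a priori} correlated with the (time) length of that slab, so a crude bound that treats every intersected slab as potentially as long as the worst one among $n$ slabs is too lossy. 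The paper's proof resolves this with two specific moves that are absent from your proposal. First, in the Doob martingale decomposition over lattice sites $z$ ordered along $v^*$ (rather than the two-walk covariance identity you write down), the increment $\Delta_z^n$ is bounded not by $O(1)$ on a crossing event but by $r_0 n^{-1/2}\sum_j \mathds{1}_{\{z,j\}}\tilde\tau_j$, i.e.\ by the rescaled time-length of the regeneration block in which $z$ is visited; this produces the bound \eqref{eqn:BoundWithCrossings} with the product of regeneration increments, not merely the number of crossings. Second, and crucially, the paper then \emph{decorrelates} the crossing event from the slab length via the Markov property of joint regeneration levels (Proposition~\ref{prop:Markov} and Lemma~\ref{lemma:TailJRL}): conditionally on the walks being within distance $n^\varepsilon$ at a joint regeneration level, the next slab length still has a second moment of order $n^{2\varepsilon}$, independently of how close the walks are. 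Combined with the expected number $n^{1/2+O(\varepsilon)}$ of close joint regeneration levels (Proposition~\ref{prop:ExpectedCrossings}), this gives $n^{-1/2+O(\varepsilon)}$. Without this decoupling step your plan has no mechanism to avoid paying for the worst regeneration increment at every crossing, and it would not close under the minimal hypothesis.
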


\begin{remark1}
	In \cite{BouchetSabotdosSantos} the same result is stated with the stronger requirement on the moments of the regeneration times $\mathbb{E}_0[\tilde{\tau}_2^2 \log(\tilde{\tau}_2)^{p}]<\infty$ for $p = p(\gamma)>0$ large enough (explicit). In the next section, we will comment on how we improve this result and why the one stated above is optimal.
\end{remark1}

\subsection{Motivation and main contribution}
RWRE with a directional drift have been a focus of attention in the last 25 years. We refer to a fundamental series of papers by A.-S.\ Sznitman and co-authors on the topic \cite{Sznitman_Zerner, Zerner, SznitmanSlowdown, SznitmanClass, SznitmanEffective, Szn_serie, Sz1, Sz2}. We would also like to mention that a recurring question in this field is to study Condition $(T)_\gamma$ and its variants (in a broader attempt to characterise ballistic behaviour). Some fundamental papers in this direction are \cite{BergerDrewitzRamirez,FriberghKiousRWRE, Guerra, RodriguezRibeiro}. Many other influential works could be listed here.

The contribution of the present work is twofold. Firstly, our main result Theorem~\ref{theo:Main} is optimal, indeed if $\mathbb{E}_0[\tilde{\tau}_2^2]=\infty$ one cannot expect a (classical) central limit theorem to hold for the process $B^{(n)}_{t}$ under the annealed law. Indeed, a quick inspection of the proof in \cite[(4.6)]{SznitmanSlowdown}, yields that it is necessary to have a CLT for $\{\tilde{\tau}_k^2\}_{k \ge 2}$ and this cannot hold if their variance is infinite. 

Our proof follows the lines of a classical strategy introduced by Bolthausen and Sznitman in \cite{Szn_serie}. This strategy has also been used in several works on biased random walks such as \cite{Berger_Zeitouni, RassoulAghaSeppalainen, Bowditch, QuenchedBiasedRWRC}, as well as in the closest work to this one \cite{BouchetSabotdosSantos}. Some of these works used a martingale difference argument to reduce the problem to controlling the number of intersections of two walks evolving in the same environment, we will follow the same strategy.

However, with respect to previous works, the present paper improves the key variance decay estimate (see Proposition~\ref{prop:VarDecay}) from $\log(n)^{-p}$ (achieved in \cite{BouchetSabotdosSantos}, $p>0$ explicit) to $n^{-1/2+ \varepsilon}$, this despite weakening our hypothesis on the tail of regeneration times. This improvement is our main contribution and is achieved by modifying the argument in two ways:
\begin{enumerate}
	\item  In the first step, as already mentioned, we use a martingale difference argument to re-write the variance of the quenched expectation of certain functionals of the random walk. The goal is to obtain Proposition~\ref{prop:VarDecay}. This is nowadays a classical argument, and typically its end-product is an upper bound for the variance in terms of the number of crossings of two random walks in the same environment. We modify this argument so that the final bound for the variance is \eqref{eqn:BoundWithCrossings}. In the expression \eqref{eqn:BoundWithCrossings} we have, in place of the number of crossings, the product of the increment of the regeneration times associated to a crossing. This somewhat weaker statement is crucial to get a better variance bound as highlighted in the next point.
	\item The event that the two walks cross each other and the time spent during these crossings are almost decorrelated. This is possible because of the markovian structure induced by joint regeneration levels (see Section~\ref{sec:RegLevels} for the definition). Note that, thanks to this decorellation step, we recover the fact that regeneration times associated with crossings have a second moment as the original ones. This step is performed in Section~\ref{section:Decoupling}.
\end{enumerate}
These two steps allow us to significantly reduce the contribution of each intersection to the variance. Informally, one can consider crossings of the two walks to be defects, then what we avoid with our argument is to consider the worst possible defect as typical. We believe this strategy to be crucial to the optimality of Theorem~\ref{theo:Main}. 

Finally, if one aims to prove a similar theorem for processes where the $\mathbb{E}_0[\tilde{\tau}_2^2] = \infty$ (see e.g.\ \cite{Poudevigne, Perrel} where non-Gaussian limits arise), then our strategy would be very helpful. Indeed, if the $\{\tilde{\tau}_i\}_{i \ge 0}$ are in the domain of a stable distribution then the worst possible defect (among the first $n$) would be of the order of the sum $\sum_{i = 1}^n\tilde{\tau}_i$. This is due to the well-known fact that the maximum of $n$ random variables in the domain of a stable distribution (with paramenter $\gamma \in (0, 1)$) is of the order of the sum. We aim to pursue this line of research in a forthcoming work that extends the result of \cite{Kious_Frib, QuenchedBiasedRWRC} to lower dimensions.

\subsection{Proof of the main theorem and outline}\label{proof:Main}

\begin{proof}[Proof of Theorem~\ref{theo:Main}]
	We recall that $\tau_1$ is an almost surely finite random variable. We define
	\begin{equation*}
		\bar{B}^{(n)}_{t} \coloneqq \frac{X_{\floor{nt} + \tau_1} -X_{\tau_1} - v_0 \floor{nt}}{\sqrt{n}}.
	\end{equation*}
    By the finiteness of $\tau_1$ we get that 
    \begin{equation*}
    	\frac{\tau_1}{\sqrt{n}} \to 0,\quad \quad\sup_{t \in [0,1]}\frac{X_{\floor{nt} + \tau_1} -X_{\floor{nt}}}{\sqrt{n}} \le r_0 \frac{\tau_1}{\sqrt{n}} \to 0,
    \end{equation*}
    almost surely (by the bounded step assumption). Hence, by an application of Slutsky's theorem it is sufficient to prove the convergence in distribution of $\bar{B}^{(n)}_{t}$. Let $(W^{(n)}_{t})_{t \in [0, T]}$ be the polygonal interpolation of $\bar{B}^{(n)}_{t}$ so that $W^{(n)} \colon [0, T] \to \mathbb{R}^d$ is a continuous function. We may drop the $t$ in the notation for brevity's sake. Let $F$ be a bounded, Lipschitz function $F \colon C([0, T], \mathbb{R}^d) \to \mathbb{R}$, where $C([0, T], \mathbb{R}^d)$ denotes the set of continuous functions $f \colon [0, T] \to \mathbb{R}^d$ that we endow with the uniform topology denoting the norm with $\|\cdot\|_\infty$. The annealed theorem corresponding to Theorem~\ref{theo:Main} was proved in \cite{SznitmanSlowdown, SznitmanClass}. By \cite[Lemma~4.1]{Szn_serie} the problem of proving Theorem~\ref{theo:Main} reduces to proving that
	\begin{equation*}
		\mathbf{Var}\left( E^\omega_0\left[F(W^{(n)})\right] \right) \le Cn^{-c}.
	\end{equation*}
    The result follows from Proposition~\ref{prop:VarDecay}.
\end{proof}

In Section~\ref{sec:Regeneration} we will recall the regeneration structures built in \cite{BouchetSabotdosSantos}. In Section~\ref{sec:Martingale} we use the martingale difference argument to bound the variance. This will lead to a proof of the fundamental inequality \eqref{eqn:BoundWithCrossings}. Finally in Section~\ref{section:Decoupling} we prove that the r.h.s.\ of \eqref{eqn:BoundWithCrossings} is small enough by using the markovian property of the joint regeneration levels to perform the decoupling step.

\section{Regeneration structure(s)}\label{sec:Regeneration}

\subsection{Regeneration times} \label{sec:RegTimes}

In this section we briefly recall how the regeneration structures are defined and their main properties. We will keep the notation of \cite{BouchetSabotdosSantos} as we will transfer some results from there. Let $\theta_{n}$ be the canonical time shift by $n$ units of time. Let $T_r^{\vec{u}} \coloneqq \inf\{n \in \mathbb{N} \colon X_n \cdot \vec{u} \ge r\}$ (for any $\vec{u} \in \mathbb{S}^{d-1}$). Let also $D^{\vec{u}} \coloneqq \inf\{n \in \mathbb{N} \colon X_n \cdot \vec{u} \le  X_0 \cdot \vec{u}\}$. We fix $v^*$ as in Condition~\ref{eqn:ConditionTGamma}. We set $S_0 \coloneqq 0$, $M_0 = X_0 \cdot v^*$ and $R_0=0$, then we define iteratively the sequence of times
\begin{equation}\label{eqn:DefRegTime}
	\begin{split}
		S_{k+1} &\coloneqq T_{M_{k+1}}^{v^*}, \quad R_{k+1} \coloneqq D^{v^*} \circ \theta_{S_{k+1}}, \\
		 M_{k+1} &\coloneqq \sup\left\{X_n \cdot v^* \colon 0 \le n \le R_{k+1} \right\}. 
	\end{split}
\end{equation}
We set $N \coloneqq \inf\{k \ge 1 \colon S_k < \infty, R_{k} = \infty\}$. Finally, we can define the regeneration times as $\tau_1 \coloneqq S_N$, and for all $n \ge 1 $ set $\tau_{n+1} \coloneqq \tau_1(X_{\cdot}) + \tau_n(X_{\tau_1+\cdot} - X_{\tau_1})$. We will call $\{X_{\tau_n}\}_{n \ge 1}$ regeneration points. 

The fundamental property of regeneration times is that $\{(X_i - X_{\tau_k})_{\tau_k \le i \le \tau_{k+1}}, \tau_{k+1} - \tau_{k}\}_{k \ge 1}$ is a sequence of finite random variables i.i.d.\ under $\mathbb{P}_0$ distributed as $\{(X_i)_{0 \le i \le \tau_{1}}, \tau_{1}\}$ under the law $\mathbb{P}_0(\cdot \mid D^{v^*} = \infty)$. We also note that, by construction, the random walk never backtracks below its starting level in the direction $v^*$ after regenerating.

\subsection{Joint regeneration levels} \label{sec:RegLevels}

From here on $X^{(1)}$ and $X^{(2)}$ will indicate two independent random walks evolving in the same environment, this notation extends to random walk-related quantities (e.g.\ $\{\tau_k^{(1)}\}_{k \ge 0}$ denotes the sequence of regeneration times of the first walk). To lighten the notation, we might omit repeating the superscript ${(1)}$, for example we could write $X^{(1)}_{\tau_1}$ in place of $X^{(1)}_{\tau_1^{(1)}}$. Furthermore, for $x, y \in \mathbb{Z}^d$, we define the quenched law of the two walks as 
\begin{equation*}
	P^\omega_{x, y}\left( X^{(1)} \in \cdot, X^{(2)} \in \cdot \right) \coloneqq P_x^{\omega}\left( X^{(1)} \in \cdot\right) P_y^{\omega}\left( X^{(2)} \in \cdot \right).
\end{equation*}
From this we can define the annealed law as $\mathbb{P}_{x, y}(\cdot) = \mathbf{E}[P^\omega_{x, y}(\cdot)]$. The notations for expectation and variance are extended in the obvious way. In the context of two walks $\theta_{n, k}$ will denote the time shift of the first by $n$ units of time and the second by $k$.

The construction of the joint regeneration structure for the model we consider was performed in \cite{RassoulAghaSeppalainen} and \cite{BouchetSabotdosSantos}. A similar structure is also defined in \cite{QuenchedBiasedRWRC}. We choose a minimalistic presentation to avoid repeating the other works too much.

Firstly, here we reduce to $v^* = e_i$ for some $i = 1, \dots, d$. Indeed, as it is stated in \cite[Proposition 5.1]{BouchetSabotdosSantos}, this is possible as Condition~\eqref{eqn:ConditionTGamma} holds for all $\vec{u} \colon \vec{u} \cdot v > 0$, where $v$, defined below \eqref{eqn:DefDirection}, is the asymptotic direction.

Using this observation we define the space $\mathbb{V}_d \coloneqq \{x \in \mathbb{Z}^d \colon x \cdot v^* = 0\}$. A joint regeneration level is, informally, the first level $\mathcal{L}>0$ such that both random walks $X^{(2)}, X^{(2)}$ regenerate when they hit it.

\begin{definition1}
	We define iteratively the joint regeneration times $(\mu_{k}^{(1)}, \mu_{k}^{(2)})$ as $(\mu_{0}^{(1)}, \mu_{0}^{(2)}) = (0, 0)$ and
	\begin{equation*}
		(\mu_{k+1}^{(1)}, \mu_{k+2}^{(2)}) \coloneqq \inf_{n, m}\left\{(\tau_n^{(1)}, \tau_m^{(2)}) \colon \tau_n^{(1)} \ge \mu_{k}^{(1)}, \tau_m^{(2)} \ge \mu_{k}^{(2)}, X^{(1)}_{\tau_n} \cdot v^* = X^{(2)}_{\tau_m} \cdot v^* \right\}.
	\end{equation*}
    Furthermore, we let $\mathcal{L}_k = X^{(1)}_{\mu_{k}} \cdot v^* = X^{(2)}_{\mu_k} \cdot v^*$ denote the $k$-th joint regeneration level.
\end{definition1}
\noindent We note that, when at a joint regeneration level, the difference $X^{(1)}-X^{(2)}$ lies in the space $\mathbb{V}_d$. Let us define the event $\{\mathcal{D}^\bullet = \infty\} \coloneqq \{D^{v^*, (1)} = D^{v^*, (2)} = +\infty\}$, i.e.\ the event of both walks regenerating. Furthermore, we define the notation $\tilde{\mu}^{(i)}_{k} \coloneqq \mu^{(i)}_{k+1} - \mu^{(i)}_{k}$ for $i = 1, 2$.

We summarise two important results \cite[Lemma~5.2, Lemma~5.5]{BouchetSabotdosSantos}:
\begin{proposition}\label{prop:ShortLevels}
	Under our assumptions, there exists a universal constant $\eta>0$ such that we have that
	\begin{equation*}
		\inf_{x, y \in \mathbb{V}_d} \mathbb{P}_{x, y} \left( \{\mathcal{D}^\bullet = \infty\} \right) \ge \eta.
	\end{equation*}
    Furthermore, we have that for all $p>0$ there exists a constant $C = C_p$ such that
    \begin{equation*}
    	\sup_{x, y \in \mathbb{V}_d} \mathbb{E}_{x, y} \Bigg[ \Big(\sup_{0 \le t \le \mu^{(1)}_1} \|X^{(1)}_t\|\Big)^p \Bigg] < \infty,
    \end{equation*}
    the same result holds symmetrically for $X^{(2)}$.
\end{proposition}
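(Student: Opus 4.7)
The plan splits naturally along the two displayed claims.

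For the first bound, quenched independence of the two walks and translation invariance of $\mathbf{P}$ give
\begin{equation*}
\mathbb{P}_{x,y}(\mathcal{D}^\bullet = \infty) = \mathbf{E}\bigl[P_x^\omega(D^{v^*} = \infty)\, P_y^\omega(D^{v^*} = \infty)\bigr],
\end{equation*}
a quantity depending only on $y - x$. Condition $(T)_\gamma$ combined with the Sznitman--Zerner single-walk construction yields $\mathbb{P}_0(D^{v^*} = \infty) = c_0 > 0$, so Jensen's inequality handles $x = y$. For general $x \neq y$, I would invoke the classical good-slab argument: partition $\mathbb{Z}^d$ into slabs of width $L$ orthogonal to $v^*$, and call a slab good if every walk entering it exits through the far face without backtracking with probability at least $\eta_0$, uniformly in entry point. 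Under $(T)_\gamma$ one can take $L$ large so that the $\mathbf{P}$-probability of being good is as close to $1$ as desired; since the environments of disjoint slabs are independent, a standard one-dependent percolation argument yields an infinite run of good slabs with uniformly positive probability, on which both walks satisfy $D^{v^*} = \infty$. The resulting bound is uniform over $x, y \in \mathbb{V}_d$.

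For the moment bound, let $N^{(1)}$ count the individual regenerations of walk $1$ in $[0, \mu_1^{(1)}]$, so that $\mu_1^{(1)} = \tau_{N^{(1)}}^{(1)}$. With the convention $\tau_0^{(1)} := 0$, a telescoping argument gives
\begin{equation*}
\sup_{0 \le t \le \mu_1^{(1)}} \bigl\|X^{(1)}_t - X^{(1)}_0\bigr\| \le \sum_{k=1}^{N^{(1)}} \sup_{\tau_{k-1}^{(1)} \le t \le \tau_k^{(1)}} \bigl\|X^{(1)}_t - X^{(1)}_{\tau_{k-1}^{(1)}}\bigr\|,
\end{equation*}
and by translation invariance it suffices to control this displacement. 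Each summand has stretched-exponential moments by \eqref{eqn:ConditionTGamma} and the i.i.d.\ structure of regeneration blocks, so once $N^{(1)}$ is shown to admit uniform polynomial tails of arbitrary order, H\"older's inequality delivers the stated $L^p$ bound for every $p$.

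The crucial step is then the geometric tail estimate $\mathbb{P}_{x,y}(N^{(1)} > n) \le (1 - \eta')^n$, uniform in $(x, y) \in \mathbb{V}_d^2$. My plan is to iterate the first bound of the proposition at each walk-$1$ regeneration time $\tau_k^{(1)}$: conditionally on the past, bounded jumps (Condition~(S)) allow one to drive walk $2$ up to the level $X_{\tau_k^{(1)}}^{(1)} \cdot v^*$ in a bounded number of steps with positive probability, and then apply the first bound to the resulting pair of positions in a common $v^*$-level to force a simultaneous no-backtracking event. The strong Markov property at $\tau_k^{(1)}$ makes successive attempts essentially independent, yielding the geometric tail. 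The main obstacle is precisely the uniformity in $y - x$: when $\|y - x\|$ is large, the two walks explore essentially disjoint environment regions initially, and the matching-of-levels step must still succeed with uniformly positive probability. Both the slab construction and the strong-Markov iteration handle this by relying only on independence across disjoint environment cells, which is the mechanism delivering uniform estimates regardless of the geometric configuration of the starting points.
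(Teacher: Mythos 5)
The paper gives no proof of this proposition: it is imported wholesale from Bouchet--Sabot--dos Santos (their Lemmas~5.2 and~5.5), which follow the joint-regeneration template of Rassoul-Agha--Sepp\"al\"ainen. So you are reconstructing transferred material; your outline for the moment bound is broadly the route the sources take --- count the single-walk regenerations $N^{(1)}$ up to the first joint level, show a geometric tail for $N^{(1)}$ by iterating attempts via the strong Markov property at regeneration times, then combine with the stretched-exponential block tails from $(T)_\gamma$ and H\"older.

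The gap is in your argument for the first bound. With your definition of a good slab --- quenched crossing probability without backtracking at least $\eta_0$, uniformly over entry points --- an infinite run of good slabs does \emph{not} give $P^\omega(D^{v^*}=\infty)>0$: the quenched event is the intersection over all slabs of ``cross this slab without going below its near face,'' so even on the ideal environment your bound compounds to $\prod_k \eta_0 = 0$. To close a renormalization argument of this kind you would need the quenched crossing failure probabilities to be summable along the run, which requires a genuine multiscale construction (slab widths growing with the index, crossing failure decaying accordingly) that you have not set up; and you would additionally need to run it simultaneously for both walks on a common ladder of levels. The references avoid this entirely: they condition at a common fresh $v^*$-level reached by both walks, use independence of the environment ahead of that level, and compare the joint no-backtracking probability to the single-walk quantity $\mathbb{P}_0(D^{v^*}=\infty)$, which is positive because $(T)_\gamma$ forces $\tau_1<\infty$ a.s. Since your geometric tail estimate for $N^{(1)}$ --- and hence your entire moment bound --- invokes this first bound, the proof as written does not close. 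My suggestion is to drop the slab picture and instead carry out the level-matching iterated-attempt argument of Rassoul-Agha--Sepp\"al\"ainen for the first bound; the rest of your outline then goes through essentially as you describe, modulo the usual care that the first regeneration block is distributed differently from the later ones and that the blocks for walk~$1$ up to $\mu_1^{(1)}$ are not quite i.i.d.\ once you condition on the joint structure.
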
 
Finally, we present the fundamental property of joint regeneration levels, that is, the markovian structure that they induce. Let us introduce the sigma-algebras
\begin{equation}\label{eqn:SigmaFields}
	\begin{split}
		\mathcal{A} &\coloneqq \sigma\left(\{X_n^{(1)}\}_{n \ge 0}, \{X_n^{(2)}\}_{n \ge 0}\right),\\
		\mathcal{B} &\coloneqq \sigma\left(\omega_x \colon x \in \{X_n^{(1)}\}_{n \ge 0} \cap \{X_n^{(2)}\}_{n \ge 0} \right), \\
		\Sigma_k &\coloneqq \sigma\left(\mathcal{L}_0, \dots, \mathcal{L}_k,\{X_{n \wedge \mu^{(1)}_k}^{(1)}\}_{n \ge 0}, \{X_{n \wedge \mu^{(2)}_k}^{(2)}\}_{n \ge 0}, \omega_x \colon x \cdot v^* < \mathcal{L}_k \right).
	\end{split}
\end{equation}
\begin{proposition}\label{prop:Markov}
	Let $f, g, h$ be bounded function measurable, respectively, with respect to $\mathcal{A}, \mathcal{B}, \Sigma_k$, then, for all $x, y \in \mathbb{Z}^d$
	\begin{equation*}
		\mathbb{E}_{x, y} \left[ f\left(X_{\mu^{(1)}_k + \cdot}^{(1)}, X_{\mu^{(2)}_k + \cdot}^{(2)}\right) g \circ \theta_{\mu^{(1)}_k, \mu^{(2)}_k} h \right] = \mathbb{E}_{x, y} \left[ h \mathbb{E}_{X^{(1)}_{\mu_k}, X^{(2)}_{\mu_k}} \left[ f\left(X_{\cdot}^{(1)}, X_{\cdot}^{(2)} \right) g \mid \mathcal{D}^\bullet = \infty\right] \right].
	\end{equation*}
\end{proposition}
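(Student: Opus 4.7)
My plan is to prove this as a strong Markov property for the joint regeneration structure, extending the classical Sznitman--Zerner argument for single-walk regenerations to the pair of walks (as was carried out in the related setting of \cite{RassoulAghaSeppalainen, BouchetSabotdosSantos}). The target identity, stripped of the $\Sigma_k$-measurable $h$, says that conditional on $\Sigma_k$ the post-$\mu_k$ pair behaves as two fresh walks started at the regeneration points, conditioned on $\mathcal{D}^\bullet = \infty$. The appearance of this conditioning event is exactly what one expects: at a joint regeneration level, \emph{both} walks have simultaneously renewed, and the event witnessing the $k$-th joint regeneration, when re-read from the shifted origin $(X^{(1)}_{\mu_k}, X^{(2)}_{\mu_k})$, is precisely $\{\mathcal{D}^\bullet = \infty\}$ for the fresh pair.

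The first ingredient is a deterministic observation. By definition, $X^{(i)}_{\mu^{(i)}_k}$ is a regeneration point for walk $i$, so $\{X^{(i)}_n\}_{n \ge \mu^{(i)}_k} \subset \{x \cdot v^* \ge \mathcal{L}_k\}$ for $i=1,2$. Hence the sites fed into $g \circ \theta_{\mu^{(1)}_k, \mu^{(2)}_k}$, which lie in the intersection of the two shifted trajectories, all sit at levels $\ge \mathcal{L}_k$, whereas $\Sigma_k$ only records $\omega_x$ for $x \cdot v^* < \mathcal{L}_k$ together with the trajectories stopped at $\mu_k$. By the i.i.d.\ product structure of $\omega$, the environment above level $\mathcal{L}_k$ is independent of $\Sigma_k$. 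Consequently, conditional on the positions $X^{(1)}_{\mu_k}, X^{(2)}_{\mu_k}$ and on the stopped past, the future $f \cdot g \circ \theta$ sees an independent copy of the environment.

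The second ingredient is the identification of the conditional law of $(X^{(1)}_{\mu^{(1)}_k + \cdot}, X^{(2)}_{\mu^{(2)}_k + \cdot})$. I would partition along the value of $(\mathcal{L}_k, X^{(1)}_{\mu_k}, X^{(2)}_{\mu_k})$ and of the stopped trajectories, and on each slice rewrite the post-$\mu_k$ dynamics as two independent walks (under the quenched product measure $P^\omega_{x,y}$) started at the regeneration points in the fresh environment above $\mathcal{L}_k$. On this slice, the event that $(\mu^{(1)}_k, \mu^{(2)}_k)$ is truly the $k$-th joint regeneration time factors out as the event that neither fresh walk ever backtracks below its starting level, i.e.\ $\{\mathcal{D}^\bullet = \infty\}$. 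Dividing by $\mathbb{P}_{X^{(1)}_{\mu_k}, X^{(2)}_{\mu_k}}(\mathcal{D}^\bullet = \infty)$ and reassembling the slices via Fubini yields the announced equality; multiplication by the $\Sigma_k$-measurable $h$ is free.

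The main obstacle is the bookkeeping around $\{\mathcal{D}^\bullet = \infty\}$: one has to verify that the probability weights produced by the renewal decomposition exactly cancel the normalization constants hidden in the conditional expectation on the right-hand side, and that this cancellation happens uniformly across slices. The clean way to do this is to first establish the analogous identity at the level of single-walk regenerations (as in \cite{Sznitman_Zerner}) applied separately to each walk, and then exploit the conditional independence of $X^{(1)}$ and $X^{(2)}$ given $\omega$ to obtain the joint factorization $P^\omega_{x,y}(\mathcal{D}^\bullet = \infty) = P^\omega_x(D^{v^*} = \infty)\,P^\omega_y(D^{v^*} = \infty)$. Once this is in place, the rest of the argument is routine, and the proposition follows.
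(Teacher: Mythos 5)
Your proposal is correct and follows essentially the same route as the paper: the paper's own proof is a one-line reference to \cite[Proposition~7.7]{RassoulAghaSeppalainen} (replacing probabilities with expectations of bounded measurable functions), and your sketch unpacks precisely that argument — the half-space factorization of the i.i.d.\ environment at level $\mathcal{L}_k$, the renewal decomposition on slices of $(\mathcal{L}_k,X^{(1)}_{\mu_k},X^{(2)}_{\mu_k})$, and the quenched product factorization $P^\omega_{x,y}(\mathcal{D}^\bullet=\infty)=P^\omega_x(D^{v^*}=\infty)\,P^\omega_y(D^{v^*}=\infty)$ making the conditioning appear.
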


\begin{proof}
	The proof follows the same steps used in the proof of \cite[Proposition 7.7]{RassoulAghaSeppalainen}, one only needs to take expectations of the functions in place of the probabilities considered there. We observe that the result implies \cite[Proposition 7.7]{RassoulAghaSeppalainen}.
\end{proof}

\section{Martingale difference} \label{sec:Martingale}

We consider the notation set in the proof of Theorem~\ref{theo:Main} in Section~\ref{proof:Main}. Without loss of generality, we will assume from here on that $|F(f)| \le 1$ for all $f \in C([0, T], \mathbb{R}^d)$ and that $|F(f) - F(g)| \le \|f- g\|_{\infty}$ for all $f, g \in C([0, T], \mathbb{R}^d)$. Most of the time we reduce, without loss of generality, to the case $T=1$. Note also that $\bar{B}^{(n)}_{t}$ has the same law as $B^{(n)}_{t}$ under $\mathbb{P}_{0}(\cdot \mid D^{v^*} = \infty)$.

\begin{proposition}\label{prop:VarDecay}
	There exists two absolute constants $C, c>0$ such that for all $n \in \mathbb{N}$ we have
	\begin{equation*}
		\mathbf{Var}\left( E^\omega_0\left[F(W^{(n)})\right] \right) \le Cn^{-c}.
	\end{equation*}
More precisely, one can choose $c<1/2$ as close to $1/2$ as desired.
\end{proposition}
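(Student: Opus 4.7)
The plan is to execute the Bolthausen--Sznitman martingale decomposition with the two modifications advertised in the introduction. Fix a filtration on the environment adapted to the direction $v^*$, for example $\mathcal{G}_k \coloneqq \sigma(\omega_x \colon x\cdot v^* < k)$, $k\in\mathbb{Z}$, and set $Y_n \coloneqq E_0^\omega[F(W^{(n)})]$. Orthogonality of martingale differences gives
\[
\mathbf{Var}(Y_n) = \sum_{k} \mathbf{E}\!\left[\bigl(\mathbf{E}[Y_n\mid\mathcal{G}_{k+1}] - \mathbf{E}[Y_n\mid\mathcal{G}_k]\bigr)^2\right],
\]
and the ballisticity together with $\mathbb{E}_0[(\tau_2-\tau_1)^2]<\infty$ concentrates the relevant range to $k = O(n)$ levels. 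It is therefore enough to show that the average summand is $O(n^{-3/2+\varepsilon})$.

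Second, I would bound each martingale difference by a two-walk quantity. Introducing a resampled environment in the slab $\{x\colon x\cdot v^* = k\}$ and expanding the square as a covariance produces an expression in terms of two independent walks $X^{(1)}, X^{(2)}$ in the same environment, both required to visit that slab. The Lipschitz and boundedness assumptions on $F$ allow one to replace $|F(W^{(n),1}) - F(W^{(n),2})|$ at a crossing by $n^{-1/2}$ times the maximal relative displacement of the two trajectories between successive coalescences. The first modification with respect to \cite{BouchetSabotdosSantos} is to retain this displacement in its integrated form: each crossing of slab $k$ contributes not a deterministic constant but the product $\tilde\tau^{(1)}\tilde\tau^{(2)}$ of the regeneration increments covering the two crossing times, divided by $n$. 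This is the announced inequality \eqref{eqn:BoundWithCrossings}.

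Third, to sum over $k$ and over crossings, I would reindex by the joint regeneration level $\mathcal{L}_j$ immediately preceding each crossing and apply Proposition~\ref{prop:Markov} conditionally on $\Sigma_j$. The regeneration increments covering the next crossing are then distributed as a fresh copy of $(\mu_1^{(1)}, \mu_1^{(2)})$ starting from the current joint position, conditionally on $\{\mathcal{D}^\bullet = \infty\}$. By Proposition~\ref{prop:ShortLevels} together with $\mathbb{E}_0[(\tau_2-\tau_1)^2]<\infty$, these increments have a uniformly bounded second moment, so the expectation of $\tilde\tau^{(1)}\tilde\tau^{(2)}\mathbf{1}_{\{\text{crossing near }\mathcal{L}_j\}}$ factors into a bounded constant times the crossing probability. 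This is the decoupling of Section~\ref{section:Decoupling}, and it is what replaces the much weaker $\log(n)^{-p}$ bound of previous works, since the worst defect contributes only as much as a typical regeneration interval.

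It then remains to estimate the expected number of crossings at the first $O(n)$ joint regeneration levels. Since the transverse difference $X^{(1)}_{\mu_j} - X^{(2)}_{\mu_j} \in \mathbb{V}_d$ spreads like $\sqrt{j}$ by the annealed CLT for the joint regeneration sequence, a standard local-limit estimate yields a crossing probability of order $j^{-(d-1)/2}$ (up to logarithmic factors), whose sum over $j\le n$ is $O(n^{1/2})$ in the worst case $d=2$. Combined with the $n^{-1}$ factor already extracted this delivers $Cn^{-1/2+\varepsilon}$, matching the claim. The delicate part of the argument is the third step: choosing canonically which regeneration interval to pair with which crossing without double-counting, and verifying the measurability conditions needed to invoke Proposition~\ref{prop:Markov}; these are exactly the technicalities that Section~\ref{section:Decoupling} must address.
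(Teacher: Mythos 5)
Your outline captures the paper's strategy: a Doob martingale decomposition, reduction to two walks in the same environment, decoupling the crossing event from the regeneration-increment sizes via the joint regeneration Markov structure, and a bound on the expected number of crossings. Two points deserve comment.

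First, you decompose over slabs $\mathcal{G}_k = \sigma(\omega_x \colon x\cdot v^* < k)$, whereas the paper refines to individual sites, $\mathcal{G}_z = \sigma(\{\omega_x\}_{x\le z})$ over a suitably ordered ball $B(0,d_n)$. This is not cosmetic. Conditioning on a single site $z$ lets one build a glued trajectory $\tilde X$ whose law is independent of $\omega_z$ and pair each visit of $z$ with exactly one regeneration increment through the indicator $\mathds{1}_{\{z,j\}}$, giving the clean bound
\[
\sup_{t\in[0,1]}\left|W^{(n)}_t - \tilde W^{(n)}_t\right| \le \frac{r_0}{n^{1/2}}\sum_{j\ge 2}\mathds{1}_{\{z,j\}}\tilde\tau_j + \frac{r_0}{n^{1/2}}(\tau_z^- - \tau_1^\bullet)\mathds{1}_{\{z,1\}}.
\]
If you resample an entire slab, the walk may visit many transversely separated sites of that slab across several regeneration intervals, and the per-regeneration bookkeeping above is no longer automatic; you would need an additional argument to recover an inequality of comparable quality.

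Second, and more substantively, you invoke an annealed CLT for the joint regeneration chain together with a ``standard local-limit estimate'' to claim that the crossing probability at the $j$-th joint regeneration level decays like $j^{-(d-1)/2}$. No such local limit theorem for $X^{(1)}_{\mu_j} - X^{(2)}_{\mu_j}$ is established in the cited literature and it would be a non-trivial addition. What is actually available, and what the paper uses through Proposition~\ref{prop:ExpectedCrossings} (imported from \cite{RassoulAghaSeppalainen} and \cite{BouchetSabotdosSantos}), is only the aggregate estimate $\mathbb{E}_{0,0}[|\mathrm{JRL}^{\le}(n,\varepsilon)|] \le n^{1/2+13\varepsilon}$, which sidesteps any local-limit input. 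Your final order of magnitude is right, but the route you sketch assumes a stronger input than exists; you should replace the pointwise local-limit step by a direct appeal to that aggregate bound.

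Finally, you do not mention the truncation steps (the events $O_n$, $G_n$) that justify restricting to a finite ball, bounding the number of $z$ counted per crossing pair by $n^{\varepsilon}$, and handling the first regeneration interval separately via Lemma~\ref{lemma:Extra2}; these are routine but necessary for the constants and exponents to close.
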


\begin{proof}
	\noindent \textbf{Finite box.} Let us define the event
	\begin{equation*}
		O_n \coloneqq \{\sup_{0 \le k \le \tau_1} \|X_k\| \le n^{\varepsilon}\}.
	\end{equation*}
    Markov inequality and \eqref{eqn:ConditionTGamma} readily give that $\mathbb{P}_{0}(O_n^c) \le Cn^{-M}$ for any $M>0$. Hence, exploiting the boundedness of $F$, we get that
    \begin{equation*}
    	\textbf{Var}\left( E^\omega_0\left[F(W^{(n)})\right] \right) \le \textbf{Var}\left( E^\omega_0\left[F(W^{(n)})\mathds{1}_{\{O_n\}}\right] \right) + Cn^{-M}.
    \end{equation*}
	\noindent \textbf{Doob's martingale.}
	Clearly
	\begin{equation*}
		\textbf{Var}\left( E^\omega_0\left[F(W^{(n)})\mathds{1}_{\{O_n\}}\right] \right) = \textbf{E}\left[ \left(E^\omega_0\left[F(W^{(n)})\mathds{1}_{\{O_n\}}\right] -  \mathbb{E}_0\left[F(W^{(n)})\mathds{1}_{\{O_n\}}\right]\right)^2\right].
	\end{equation*}
    Let us fix the notation $d_n \coloneqq n^{1+\varepsilon}$, we remark that $W^{(n)}$ depends only the first $\tau_1 + n$ steps of the random walk. Let $B(0,d_n)$ be the $L^1$ ball centred at the origin and of radius $d_n$, we introduce some fixed ordering of $B(0, d_n)$ such that all $z$ that precede $x$ in the ordering are such that $z \cdot v^* \le x \cdot v^*$ for all $x \in B(0, d_n)$. Let $p(z)$ denote the predecessor of $z$ in such ordering. Finally let $\mathcal{G}_z \coloneqq \sigma( \{\omega_x\}_{x \le z})$, where the inequality symbol should be interpreted in the ordering sense. By applying a standard Doob martingale argument to the filtration $\{\mathcal{G}_z\}_{z \in B(0, d_n)}$ we get
	\begin{equation}\label{eqn:BoundVarIntersectionClock}
		\textbf{Var}\left( E^\omega_0\left[F(W^{(n)})\right] \right) = \sum_{z \in B(0,d_n)} \mathbf{E}\left[ \left(\Delta_{z}^n \right)^2\right],
	\end{equation}
where
\begin{equation*}
	\Delta_{z}^n \coloneqq \mathbf{E}\left[ E_0^\omega\left[ F(W^{(n)})\mathds{1}_{\{O_n\}} \right] | \mathcal{G}_{z} \right] - \mathbf{E}\left[ E_0^\omega\left[ F(W^{(n)})\mathds{1}_{\{O_n\}} \right] | \mathcal{G}_{p(z)} \right].
\end{equation*}
Let us also notice that
\begin{equation*}
	\mathbf{E}\left[ E_0^\omega\left[ F(W^{(n)})\mathds{1}_{\{O_n\}} \mathds{1}_{\{X \text{ visits }z \}^c} \right] | \mathcal{G}_{z} \right]  = \mathbf{E}\left[ E_0^\omega\left[ F(W^{(n)})\mathds{1}_{\{O_n\}}\mathds{1}_{\{X \text{ visits }z \}^c} \right] | \mathcal{G}_{p(z)} \right],
\end{equation*}
hence we can restrict to studying the quantity of interest on the event $\{X \text{ visits }z \}$.

\noindent \textbf{Non uniform bound.} We now aim to construct a quantity that approximates $W^{(n)}$ but whose distribution does not depend on the environment at $z$. For two paths $\pi_1$ and $\pi_2$ of length $m$ and $k$ respectively, let $\pi_1 \circ \pi_2$ denote the concatenation, i.e.\ for $ 0 \le s \le m+k$ 
\begin{equation}\label{eqn:ConcatenationPaths}
	\pi_1 \circ \pi_2(s) = 
	\begin{cases}
		\pi_1(s) \quad &\text{if } s \le m,\\
		\pi_1(m) + \pi_{2}(s - m) \quad &\text{if } s \in \llbracket m, k+m \rrbracket.
	\end{cases}
\end{equation}
Let $\{X\}_{a}^{b}$ denote the trajectory of the random walk between times $a$ and $b$. We recall the shortcut $T_{z\cdot v^*} = \inf\{k \ge 0\colon X_{k} \cdot v^* \ge z \cdot v^*\}$. Let us set the notation $\tau_z^{-} = T_{z\cdot v^*}$, note that this quantity is almost surely well defined by the transience of the random walk. Furthermore, let $\tau^+_z \coloneqq \inf\{ \tau_{k} \colon X_{\tau_{k}} \cdot v^* > z \cdot v^*\}$, i.e.\ the first regeneration time that happens to the ``right'' of $z$. Furthermore, let $\tau_1^\bullet$ be the first candidate regeneration time in $\{X\}_{0}^{\tau_z^-}$. In formulas, $\tau_1^\bullet = \tau_1^\bullet(z) \coloneqq \inf\{ k \le T_{z\cdot v^*} \colon X_{k}\cdot v^* > X_{j}\cdot v^* \text{ for all }j\le k, D^{v^*}\circ \theta_k + k \ge T_{z} \}$.

We define the approximating quantity as $\{\tilde{X}\}_{0}^n = \{X\}_{0}^{\tau_z^-} \circ \{X - X_{\tau_z^+} \}_{\tau_z^+}^{n+ \tau_1^\bullet}$ and as a consequence we define
\begin{equation}\label{eqn:GluedTrajectory}
	\tilde{B}^{(n)}_{t} \coloneqq \frac{\tilde{X}_{\floor{nt} + \tau^\bullet_1} -X_{\tau^\bullet_1} - v_0 \floor{nt}}{\sqrt{n}}, \quad t \ge 0.
\end{equation}
Finally, one can define $(\tilde{W}^{(n)}_t)_{t \in [0, 1]}$ as the polygonal interpolation of \eqref{eqn:GluedTrajectory}. We make the following observations on $\{\tilde{X}\}_{0}^n$:
\begin{enumerate}
	\item The trajectory $\{\tilde{X}\}_{0}^n$ is independent of the environment at $z$. Indeed, the trajectory before $\tau_z^- \le T_{z\cdot v^*} \le T_{z}$ is clearly independent, and the trajectory after $\tau^+_z$ is also independent by the fundamental property of regeneration times (see Section~\ref{sec:RegTimes}). 
	\item We also claim that
	\begin{equation*}
		\sup_{t \in [0, 1]} \left| W^{(n)}_t - \tilde{W}^{(n)}_t \right| \le \frac{r_0}{n^{1/2}} \sum_{j = 2}^{n+1} \mathds{1}_{\{z, j\}} \tilde{\tau}_{j} + \frac{r_0}{n^{1/2}}(\tau^{-}_z - \tau^\bullet_1)\mathds{1}_{\{z, 1\}},
	\end{equation*}
    where $\{z, j\} \coloneqq \{ z \in \{X\}_{\tau_{j-1}}^{\tau_{j}-1}\}$. In words, the difference between the two trajectories is upper bounded by the re-scaled increment of the (unique) regeneration time in which the hitting of site $z$ happens. The first regeneration time is treated separately as its distribution is not the one of an increment between successive regeneration times.
\end{enumerate}
To support the second claim we observe that, after hitting $z$ (let's assume for now that $\{z, j\}$ holds for $j \ge 2$), the walk can erase the ``candidate regeneration points'' that it already met. Hence the difference $\tau_z^+ - \tau_z^- \le \sum_{j = 2}^{n+1} \mathds{1}_{\{z, j\}} \tilde{\tau}_{j}$, the claim follows. Note as well that, by construction, we have that deterministically $\tau_{n+1} > n + \tau_1$. On the other hand, if $\mathds{1}_{\{z, 1\}}$ happens, then the quantity $(\tau^{-}_z - \tau^\bullet_1)$ does appear in $\tilde{W}^{(n)}_t$ (by its construction), but not in $W^{(n)}_t$ as $\tau_1 \ge \tau^{-}_z$.

We set the notation
\begin{equation*}
	\mathbf{E}_z\left[ E_0^\omega\left[ \cdot \right] \right] \coloneqq \mathbf{E}\left[ E_0^\omega\left[ \cdot \right] | \mathcal{G}_{z} \right].
\end{equation*}
We also set in the same way the notation $\mathbf{E}_{p(z)}\left[ E_0^\omega\left[ \cdot \right] \right]$. From the considerations above we observe that
\begin{equation}\label{eqn:EqualityConditional}
	\mathbf{E}_z\left[ E_0^\omega\left[ F(\tilde{W}^{(n)}) \right] \right] = \mathbf{E}_{p(z)}\left[ E_0^\omega\left[ F(\tilde{W}^{(n)}) \right] \right].
\end{equation}

\noindent \textbf{Intermezzo on $\tau_1$.} We now briefly focus on describing the quantity $(\tau^{-}_z - \tau^\bullet_1)$ on the event $\{z, 1\}$. Note that both these quantities were defined as candidate regeneration times. Hence, both $X_{\tau^\bullet_1}$ and $X_{\tau^{-}_z}$ are associated to maxima of the random walk in the direction $v^*$. Then, if taken under the measure $\mathbb{P}_0$, the quantity $(\tau^{-}_z - \tau^\bullet_1)$ can be dominated by $T_{z \cdot v^*}$ taken under the measure $\mathbb{P}_0( \cdot \mid D^{v^*} \ge T_{z \cdot v^*})$. Since we consider all these quantities on the event $O_n$, we are able to dominate 
\begin{equation*}
	\mathbb{E}_{0}[(\tau^{-}_z - \tau^\bullet_1)\mathds{1}_{O_n}] \le \sup_{0 \le z\cdot v^* \le n^{\varepsilon}} \mathbb{E}_0[ T_{z\cdot v^*}| D^{v^*} \ge T_{z\cdot v^*}].
\end{equation*}
We will show how the measure $\mathbb{P}_0$ appears and how to apply this domination in the following steps.

\noindent \textbf{Jensen's inequality.} We note that, by the Lipschitz and boundedness of $F$, we can write
\begin{equation*}
	\left|F(W^{(n)}) - F(\tilde{W}^{(n)}) \right| \le  r_0\sum_{j = 2}^{n+1} \mathds{1}_{\{z, j\}} \left(\frac{\tilde{\tau}_{j}}{n^{1/2}} \wedge 1\right)+ (\frac{r_0}{n^{1/2}}(\tau^{-}_z - \tau^\bullet_1)\wedge 1)\mathds{1}_{\{z, 1\}}.
\end{equation*}
Putting together the estimates above and the definition of $\Delta_{z}^n$, we get terms of the form
\begin{equation*}
	\sum_{z \in B(0, d_n)}\mathbf{E}\left[\mathbf{E}_z\left[ E_0^\omega\left[ r_0\sum_{j = 2}^{n+1} \mathds{1}_{\{z, j\}} \left(\frac{\tilde{\tau}_{j}}{n^{1/2}} \wedge 1\right)+ \frac{r_0}{n^{1/2}}((\tau^{-}_z - \tau^\bullet_1)\wedge 1)\mathds{1}_{\{z, 1\}}  \right] \right]^2 \right].
\end{equation*} 
Note that there is also a corresponding term with $\mathbf{E}_{p(z)}$ (we can get rid of the cross terms in the square by using the inequality $(a+b)^2 \le 2a^2 + 2b^2$). We use Jensen's inequality, the tower property  to get the upper bound
\begin{equation}\label{eqn:DeltaModified}
	4\sum_{z \in B(0, d_n)}\mathbf{E}\left[ E_0^\omega\left[ \sum_{j = 2}^{n+1} \mathds{1}_{\{z, j\}} \left(\frac{\tilde{\tau}_{j}}{n^{1/2}} \wedge 1\right) \right]^2\right] + 4\sum_{z \in B(0, d_n)} \mathbb{E}_0\left[(\frac{r_0}{\sqrt{n}}(\tau^{-}_z - \tau^\bullet_1)\wedge 1)^2\mathds{1}_{\{z, 1\}}\right].
\end{equation} 
The second term is handled easily as we notice that if $\|z\|\ge n^\varepsilon$ we have $\mathbb{E}_0[\mathds{1}_{\{z, 1\}}]\le n^{-M}$ for $M>0$ arbitrarily large, hence
\begin{equation*}
	4 \sum_{z \in B(0, d_n)} \mathbb{E}_0\left[(\frac{r_0}{\sqrt{n}}(\tau^{-}_z - \tau^\bullet_1)\wedge 1)^2\mathds{1}_{\{z, 1\}}\right]\le Cn^{d\varepsilon - 1}\sup_{0 \le z\cdot v^* \le n^{\varepsilon}} \mathbb{E}_0[ T^2_{z\cdot v^*}| D^{v^*} \ge T_{z\cdot v^*}] + n^{-M+2d}.
\end{equation*}
We apply Lemma~\ref{lemma:Extra2} to conclude that the second term in \eqref{eqn:DeltaModified} is smaller than $Cn^{-1+2d\varepsilon}$ and therefore negligible as $\varepsilon>0$ can be chosen suitably small.

\noindent \textbf{Intersections of two walks.} We now use the promised strategy of analysing two walks in the same environment. Indeed, with hopefully obvious notation, we get that the first summand in \eqref{eqn:DeltaModified} is equal (up to a universal multiplicative constant) to
\begin{equation*}
	\sum_{z \in B(0, d_n)}\mathbf{E}\left[ E_0^\omega\left[ \sum_{j = 2}^{n+1} \mathds{1}^{(1)}_{\{z, j\}} \left(\frac{\tilde{\tau}^{(1)}_{j}}{n^{1/2}} \wedge 1\right) \right] E_{0}^\omega\left[ \sum_{j = 2}^{n+1} \mathds{1}^{(2)}_{\{z, j\}} \left(\frac{\tilde{\tau}^{(2)}_{j}}{n^{1/2}} \wedge 1\right) \right]\right].
\end{equation*} 
We note that, by re-writing the quantity in the last display using the notation set in Section~\ref{sec:RegLevels}, we obtain
\begin{equation}\label{eqn:PenultimateStepSum}
	\sum_{z \in B(0, d_n)}\mathbf{E}\left[ E_{0, 0}^\omega\left[ \sum_{j, k = 2}^{n+1} \mathds{1}^{(1)}_{\{z, j\}} \left(\frac{\tilde{\tau}^{(1)}_{j}}{n^{1/2}} \wedge 1\right) \mathds{1}^{(2)}_{\{z, k\}} \left(\frac{\tilde{\tau}^{(2)}_{k}}{n^{1/2}} \wedge 1\right) \right]\right].
\end{equation} 
Before proceeding, we define the random sets 
\begin{equation}\label{eqn:RandomIndeces}
	\begin{split}
		J_n &\coloneqq \left\{j = 2, \dots ,n +1 \colon \{X^{(1)}\}_{\tau_j}^{\tau_{j+1} - 1} \cap \{X^{(2)}\}_{\tau_1}^{\tau_{n+1}} \neq \emptyset\right\} \\
		K_n(j) &\coloneqq \left\{k = 2, \dots ,n+1 \colon \{X^{(2)}\}_{\tau_k}^{\tau_{k+1} - 1} \cap \{X^{(1)}\}_{\tau_j}^{\tau_{j+1}-1} \neq \emptyset\right\}.
	\end{split}
\end{equation}
We remark that, even if the definition above is not symmetric, the index of the walk does not play any meaningful role and it could be reversed without any issues.

Let us introduce the event 
\begin{equation*}
	G_n \coloneqq \left\{\sup_{i \in\{1, 2\}, j = 2, \dots, n+1} \|\{X^{(i)} - X^{(i)}_{\tilde{\tau}_{j-1}}\}_{\tilde{\tau}_{j-1}}^{\tilde{\tau}_{j}}\|_\infty \le n^{\varepsilon/d} \right\}.
\end{equation*}
It is immediate using \eqref{eqn:ConditionTGamma}, union bound and Markov inequality to get $\mathbb{P}_{0, 0}(G_n^c) \le n^{-M}$ for arbitrary large $M>0$. Hence, since the sum inside \eqref{eqn:PenultimateStepSum} is bounded by $n$ and $|B(0, d_n)|\le C_dn^{d+1}$, we get 
\begin{equation*}
		\sum_{z \in B(0, d_n)}\mathbf{E}\left[ E_{0, 0}^\omega\left[ \sum_{j, k = 2}^{n+1} \mathds{1}^{(1)}_{\{z, j\}} \left(\frac{\tilde{\tau}^{(1)}_{j}}{n^{1/2}} \wedge 1\right) \mathds{1}^{(2)}_{\{z, k\}} \left(\frac{\tilde{\tau}^{(2)}_{k}}{n^{1/2}} \wedge 1\right) \mathds{1}_{\{G_n\}} \right]\right] + Cn^{-M}.
\end{equation*}
We drop the negligible term $Cn^{-M}$ from now on. We make the following observations:
\begin{enumerate}
	\item Two indices $j, k$ in the sum \eqref{eqn:PenultimateStepSum} are present only if there exists $z \in B(0, d_n)$ on which the two walks meet.
	\item A pair of indices $j \in J_n$ and $k \in K_n(j)$ is counted at most $n^{\varepsilon}$ times when summing over $z \in B(0, d_n)$. Indeed, this is guaranteed by the good event $G_n$.
\end{enumerate}
Hence, we get that the quantity in \eqref{eqn:PenultimateStepSum} is bounded from above by 
\begin{equation*}
	n^{\varepsilon}\mathbf{E}\left[ E_{0, 0}^\omega\left[ \sum_{j \in J_n, k = K_n(j)} \left(\frac{\tilde{\tau}^{(1)}_{j}}{n^{1/2}} \wedge 1\right) \left(\frac{\tilde{\tau}^{(2)}_{k}}{n^{1/2}} \wedge 1\right) \right]\right].
\end{equation*} 
Note that here we cannot pull the sum out of the expectation as it is random itself. We further bound the sum, let us define a set of random joint regeneration levels as follows
\begin{equation}\label{eqn:RandomIndecesJRL}
    \mathrm{JRLC}(n) = \left\{ i=1, \dots, n \colon \{X^{(1)}\}_{\mu_i}^{\mu_{i+1}} \cap \{X^{(2)}\}_{\mu_i}^{\mu_{i+1}} \neq \emptyset \right\}.
\end{equation}
In words, these are the joint regeneration slabs in which a meeting occurs. By the construction of joint regeneration levels of Section~\ref{sec:RegLevels}, it follows that we get the upper bound
\begin{align}
	\mathbf{Var}\left( E^\omega_0\left[F(W^{(n)})\right] \right) &\le n^{\varepsilon}\mathbf{E}\left[ E_{0, 0}^\omega\left[ \left(\frac{\mu^{(1)}_{1} - \tau^{(1)}_1}{n^{1/2}} \wedge 1\right) \left(\frac{\mu^{(2)}_{1} - \tau^{(2)}_1}{n^{1/2}} \wedge 1\right) \right]\right]\nonumber\\
	&+n^{\varepsilon}\mathbf{E}\left[ E_{0, 0}^\omega\left[ \sum_{i \in \mathrm{JRLC}(n)} \left(\frac{\tilde{\mu}^{(1)}_{i}}{n^{1/2}} \wedge 1\right) \left(\frac{\tilde{\mu}^{(2)}_{i}}{n^{1/2}} \wedge 1\right) \right]\right] + Cn^{-1/2}.\label{eqn:BoundWithCrossings}
\end{align} 
The result follows by a direct application of Lemma~\ref{lemma:Extra1} and Proposition~\ref{prop:DecorrelationBound}. We note that the $c$ of the statement can be quantified as $1/2 - \delta$ for an arbitrary small $\delta>0$.
\end{proof}

\section{Decoupling through JRL} \label{section:Decoupling}

For $n \in \mathbb{N}$ and $\varepsilon>0$ define the set of indices
\begin{equation*}
	\mathrm{JRL}^{\le}(n, \varepsilon) \coloneqq\left\{k =1, \dots, n \colon \|X^{(1)}_{\mu_k} - X^{(2)}_{\mu_k}\|\le n^{\varepsilon}\right\}.
\end{equation*}
Furthermore, we set $\mathrm{JRL}^{>}(n, \varepsilon) = \{1, \dots, n\}\setminus \mathrm{JRL}^{\le}(n, \varepsilon)$. We may write $\mathrm{JRL}^{\le}(n)$ in place of $\mathrm{JRL}^{\le}(n, \varepsilon)$. The first result of this section shows that the number of intersections for the two walks is upper bounded by $n^{1/2+c\varepsilon}$.

\begin{proposition}\label{prop:ExpectedCrossings}
	For all $\varepsilon>0$ we have that, for all $n$ large enough
	\begin{equation*}
		\mathbb{E}_{0, 0}[|\mathrm{JRL}^{\le}(n, \varepsilon)|] \le n^{1/2 + 13 \varepsilon}.
	\end{equation*}
\end{proposition}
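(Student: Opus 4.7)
The plan is to analyse the transverse-difference process $D_k := X^{(1)}_{\mu_k} - X^{(2)}_{\mu_k}$, which lives in $\mathbb{V}_d \cong \mathbb{Z}^{d-1}$, and to prove a diffusive local bound of the form
\begin{equation*}
	\mathbb{P}_{0, 0}\bigl(D_k \in B(0, n^\varepsilon)\bigr) \le C\left(\frac{n^\varepsilon}{\sqrt{k}} \wedge 1\right)^{d-1}, \qquad 1 \le k \le n.
\end{equation*}
Summing this estimate over $k \in \{1,\dots,n\}$ yields $\mathbb{E}_{0,0}[|\mathrm{JRL}^{\le}(n, \varepsilon)|] = O(n^{1/2 + \varepsilon})$ in the worst case $d = 2$, and $O(n^{2\varepsilon}(\log n)^{O(1)})$ or smaller for $d \ge 3$, both well inside the claimed bound $n^{1/2 + 13\varepsilon}$.

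First, I would set up the Markov chain structure of $(D_k)$: combining Proposition~\ref{prop:Markov} with the translation invariance of $\mathbf{P}$ shows that $(D_k)_{k \ge 0}$ is a Markov chain on $\mathbb{V}_d$ whose transition kernel depends only on the current state. Proposition~\ref{prop:ShortLevels} provides all polynomial moments of a single JRL increment uniformly in the starting positions in $\mathbb{V}_d$, and condition~(R) ensures that the increment has non-degenerate covariance on $\mathbb{V}_d$. The label-swap symmetry $(X^{(1)}, X^{(2)}) \leftrightarrow (X^{(2)}, X^{(1)})$ combined with translation invariance makes the one-step drift $d \mapsto \mathbb{E}[D_1 - d \mid D_0 = d]$ an odd function of $d$, vanishing at $0$ and becoming negligible when $\|d\|$ is large (the two walks essentially decouple at large separation, so their marginal drifts cancel).

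Next, I would prove the local bound in the displayed inequality. For a genuine i.i.d.\ random walk in $\mathbb{Z}^{d-1}$ with the moment and non-degeneracy properties just listed, this is the classical local CLT / Green's function estimate. To transfer it to the Markov chain $(D_k)$, I plan to couple $(D_k)$ (once $\|D_k\|$ exceeds a small power of $n$) with an i.i.d.\ random walk built from independent copies of the single-walk JRL increments; by the decoupling noted above, the coupling succeeds with probability close to one. The contribution of the initial segment, where $\|D_k\|$ is still small, is handled separately using the expected escape time from $B(0,n^\varepsilon)$, which is at most $n^{2\varepsilon + o(1)}$ by the bounded-variance of the increments.

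Finally, a straightforward summation gives
\begin{equation*}
	\mathbb{E}_{0, 0}[|\mathrm{JRL}^{\le}(n, \varepsilon)|] \le n^{2\varepsilon} + C n^{\varepsilon(d-1)} \sum_{k = n^{2\varepsilon}}^{n} k^{-(d-1)/2},
\end{equation*}
which closes the argument. The main obstacle is the local-CLT step: because the increments of $(D_k)$ depend on its current state, one cannot invoke the classical local CLT directly and must carefully transfer it across the coupling. I expect the large slack factor $n^{13\varepsilon}$ in the statement to absorb the polynomial losses incurred by truncating atypically large JRL increments, by the coupling itself, and by handling the conditioning on $\mathcal{D}^\bullet = \infty$ inherent in Proposition~\ref{prop:Markov}.
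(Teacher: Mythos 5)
The paper's own ``proof'' here is only a citation: it points to \cite[Theorem~A.1, eq.~(A.4)]{RassoulAghaSeppalainen}, whose hypotheses are verified for this model in \cite{BouchetSabotdosSantos}. Your proposal is an attempt to rebuild that argument rather than to mirror anything proved in the present paper, and at the level of strategy it follows the same route as the cited Theorem~A.1: view the transverse difference $D_k = X^{(1)}_{\mu_k} - X^{(2)}_{\mu_k}$ as a $\mathbb{V}_d$-valued Markov chain (correct, by Proposition~\ref{prop:Markov} and translation invariance), use Proposition~\ref{prop:ShortLevels} for moments and condition~(R) for non-degeneracy, and bound the expected occupation time of $B(0,n^\varepsilon)$ diffusively.

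The genuine gap is in your handling of the near-zero regime. You assert that the expected escape time from $B(0,n^\varepsilon)$ is at most $n^{2\varepsilon+o(1)}$ ``by the bounded-variance of the increments.'' That is not a theorem for a general Markov chain: bounded-variance increments give an $O(r^2)$ escape time from a ball of radius $r$ only under a supermartingale-in-norm or controlled-drift assumption, whereas even a \emph{bounded} inward drift can make the escape time exponential in $r$ (think of a nearest-neighbour birth--death chain on $\{0,\dots,r\}$ with a uniform bias toward $0$). The only structural input you invoke near $0$ is the label-swap oddness of the drift, which forces the drift to vanish at $D=0$ but says nothing about its \emph{sign} for $D\ne 0$. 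Your coupling to an i.i.d.\ walk, by your own description, is launched only once $\|D_k\|$ exceeds a power of $n$ and therefore says nothing about this region either. Something is needed to rule out an attracting core near $0$; in \cite{RassoulAghaSeppalainen} this is handled through irreducibility/recurrence estimates for the Markov chain near $0$ together with a Green-function computation that does not factor through a local CLT plus inner escape time in the way you propose. Without such a step, the displayed local bound $\mathbb{P}_{0,0}(D_k \in B(0,n^\varepsilon)) \le C(n^\varepsilon/\sqrt k \wedge 1)^{d-1}$ is unproved for $k \lesssim n^{2\varepsilon}$, and, more importantly, so is the claim that the occupation time of the ball is dominated by a single $n^{2\varepsilon}$-length excursion. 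The rest of the sketch (the Markov structure, the moment bounds, the summation over $k$, which is critical only for $d=2$) is sound and would deliver the proposition once the local bound is established uniformly in $k$.
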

\begin{proof}
	The result is proved in \cite{RassoulAghaSeppalainen} and \cite{BouchetSabotdosSantos}. Indeed, in the latter, it is proved that the RWRE considered here satisfies the assumptions of \cite[Theorem~A.1]{RassoulAghaSeppalainen}. 
	
	An intermediate step of the proof of \cite[Theorem~A.1]{RassoulAghaSeppalainen} is equation \cite[(A.4)]{RassoulAghaSeppalainen}. Note that the $13\varepsilon$ is stated at the end of the proof (end of \cite[Appendix~A]{RassoulAghaSeppalainen}). Observe that $\varepsilon>0$ can be chosen anyway arbitrarily small in our context (thanks to the assumption of Condition $(T)_\gamma$). 
\end{proof}

The next proposition, the main of this section, shows that the increment of the joint regeneration times associated to intersections is negligible in the limit.

\begin{proposition}\label{prop:DecorrelationBound}
	For all $\varepsilon>0$ there exist a positive constant $C>0$, such that, for all $n \in \mathbb{N}$ 
	\begin{equation*}
		\mathbb{E}_{0, 0}\left[ \sum_{i \in \mathrm{JRLC}(n)} \left(\frac{\tilde{\mu}^{(1)}_{i}}{n^{1/2}} \wedge 1\right) \left(\frac{\tilde{\mu}^{(2)}_{i}}{n^{1/2}} \wedge 1\right) \right]\le C n^{-1/2 + \varepsilon}.
	\end{equation*}
\end{proposition}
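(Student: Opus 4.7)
My plan is to use Proposition~\ref{prop:Markov} to reduce the random sum to $n$ copies of a single-slab functional, and to control the latter by decoupling the meeting event from the slab length. As a preliminary step, I restrict to $i\in\mathrm{JRL}^\le(n,\varepsilon)$: if $\|X^{(1)}_{\mu_i}-X^{(2)}_{\mu_i}\|>n^\varepsilon$, then a meeting in slab $i$ forces at least one walk to travel more than $n^\varepsilon/2$ before the next joint regeneration level, an event of probability $O(n^{-M})$ for every $M>0$ by Proposition~\ref{prop:ShortLevels} and a union bound over $i\le n$; the corresponding contribution is negligible since each summand lies in $[0,1]$. Applying Proposition~\ref{prop:Markov} at $\mathcal{L}_i$ and using translation invariance, the slab $(\mu_i,\mu_{i+1})$ is distributed as the first joint regeneration slab of a fresh pair started from $(0,\Delta_i)$ with $\Delta_i:=X^{(1)}_{\mu_i}-X^{(2)}_{\mu_i}\in\mathbb{V}_d$, under $\mathbb{P}_{0,\Delta_i}(\,\cdot\mid\mathcal{D}^\bullet=\infty)$. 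Writing $\bar X=(\tilde\mu^{(1)}_1/\sqrt n)\wedge 1$, $\bar Y=(\tilde\mu^{(2)}_1/\sqrt n)\wedge 1$, and letting $M_1$ be the event that the two walks visit a common vertex before the first joint regeneration, the sum reduces to $\sum_i\mathbb{E}_{0,0}[\mathds{1}_{\{i\in\mathrm{JRL}^\le(n,\varepsilon)\}}\,\Phi_n(\Delta_i)]$ where
\begin{equation*}
\Phi_n(\Delta):=\mathbb{E}_{0,\Delta}\bigl[\mathds{1}_{M_1}\,\bar X\bar Y\bigm|\mathcal{D}^\bullet=\infty\bigr],\qquad p_n(\Delta):=\mathbb{P}_{0,\Delta}(M_1\mid\mathcal{D}^\bullet=\infty).
\end{equation*}

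The heart of the argument is the uniform bound $\Phi_n(\Delta)\le (C/n)\,p_n(\Delta)$ for all $\|\Delta\|\le n^\varepsilon$. By Cauchy--Schwarz this follows from $\mathbb{E}_{0,\Delta}[(\tilde\mu^{(j)}_1)^2\mid M_1,\mathcal{D}^\bullet]\le C$ uniformly in $j\in\{1,2\}$ and $\Delta$. To prove it, I would split $\tilde\mu^{(j)}_1=T_m^{(j)}+(\tilde\mu^{(j)}_1-T_m^{(j)})$ at the first meeting time $T_m=(T_m^{(1)},T_m^{(2)})$, i.e.\ the earliest pair of times at which both walks visit a common vertex $w$. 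The pre-meeting portion $T_m^{(j)}$ is dominated by the slab extent, which has arbitrary polynomial moments uniformly in $\Delta$ by Proposition~\ref{prop:ShortLevels}. For the residual, the strong Markov property together with translation invariance allows one to dominate $\tilde\mu^{(j)}_1-T_m^{(j)}$ stochastically by a fresh copy of $\mu^{(j)}_1$ starting from $(w,w)$, whose second moment is finite: the number of individual regenerations before the next joint one is geometric by Proposition~\ref{prop:ShortLevels} and $\mathbb{E}_0[\tilde\tau_2^2]<\infty$ by hypothesis. Plugging the key bound into the Markov reduction and using
\begin{equation*}
\sum_{i=1}^n\mathbb{E}_{0,0}\bigl[\mathds{1}_{\{i\in\mathrm{JRL}^\le(n,\varepsilon)\}}\,p_n(\Delta_i)\bigr]=\mathbb{E}_{0,0}[|\mathrm{JRLC}(n)\cap\mathrm{JRL}^\le(n,\varepsilon)|]\le \mathbb{E}_{0,0}[|\mathrm{JRL}^\le(n,\varepsilon)|]\le n^{1/2+13\varepsilon}
\end{equation*}
(the equality is another instance of Proposition~\ref{prop:Markov}, the last inequality is Proposition~\ref{prop:ExpectedCrossings}) gives the target $Cn^{-1/2+13\varepsilon}$ after renaming $\varepsilon$.

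The delicate step is the post-meeting estimate: neither walker need have regenerated at $T_m$, so the strong Markov property does not directly produce a fresh joint regeneration pair starting from $(w,w)$. One must handle the fact that the environment below level $w\cdot v^*$ is already partially exposed and that each walker can still backtrack within this strip before the next joint regeneration. The conditioning on $\mathcal{D}^\bullet=\infty$ forbids backtracking below level $0$, and a careful coupling should show that the extra moment cost attributable to the exposed strip between levels $0$ and $w\cdot v^*$ is absorbed by the arbitrary-moment control on the slab extent provided by Proposition~\ref{prop:ShortLevels}.
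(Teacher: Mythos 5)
Your overall architecture (restrict to $\mathrm{JRL}^\le(n,\varepsilon)$, apply Proposition~\ref{prop:Markov} at each $\mathcal{L}_i$, then sum the single-slab cost against $\mathbb{E}_{0,0}[|\mathrm{JRL}^\le(n,\varepsilon)|]$) is sound and parallels the paper, but your key pointwise estimate is wrong. You claim $\Phi_n(\Delta)\le (C/n)\,p_n(\Delta)$ uniformly over $\|\Delta\|\le n^\varepsilon$, which via Cauchy--Schwarz you reduce to $\mathbb{E}_{0,\Delta}\bigl[(\tilde\mu^{(j)}_1)^2\mid M_1,\mathcal{D}^\bullet=\infty\bigr]\le C$ for $j=1,2$. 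This cannot hold uniformly in $\Delta$: on the meeting event $M_1$, the two walkers start at $\mathbb{V}_d$-distance $\|\Delta\|$ and visit a common site $w$, so by bounded steps $\|\Delta\|\le r_0(T_m^{(1)}+T_m^{(2)})$ and hence $\max_j T_m^{(j)}\ge \|\Delta\|/(2r_0)$. Since $\tilde\mu^{(j)}_1\ge T_m^{(j)}$, at least one of the conditional second moments is $\gtrsim\|\Delta\|^2$, which is of order $n^{2\varepsilon}$, not $O(1)$. The correct order for the slab cost is $n^{2\varepsilon-1}$, which is exactly what the paper's Lemma~\ref{lemma:TailJRL} delivers; your $C/n$ gains nothing in the final exponent but makes the claim false.

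There is a second, structural issue even after repairing the exponent: the paper deliberately avoids conditioning the slab-length moment on the meeting event. It replaces $\mathds{1}_{\{i\in\mathrm{JRLC}\}}$ by $\mathds{1}_{\{i\in\mathrm{JRL}^\le\}}$ (past-measurable), then under the conditional expectation simply drops the meeting indicator and bounds the \emph{unconditional} $\mathbb{E}_{U_1,U_2}[\hat\mu^{(1)}_1\hat\mu^{(2)}_1\mid\mathcal{D}^\bullet=\infty]$ using that $\mu_1\le\tau_{n^\varepsilon}$ on a high-probability event (a sum of $n^\varepsilon$ i.i.d.\ increments with finite second moment). Your route keeps the conditioning on $M_1$ inside $\Phi_n$, and you would then need to show that meeting does not inflate the second moment of the slab length. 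Your sketch (split at $T_m$, appeal to ``slab extent has arbitrary polynomial moments'') is not enough: Proposition~\ref{prop:ShortLevels} controls \emph{spatial} extent, and with bounded steps spatial control gives no direct temporal control; moreover, a random sum of $N$ i.i.d.\ variables with only finite second moments has only finite second moment even when $N$ has all moments, so ``arbitrary polynomial moments'' is overstated. The post-meeting coupling you flag as delicate is genuinely non-trivial and would need a full argument; the paper's unconditional bound is cleaner precisely because it never needs it.
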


\begin{proof}
	The idea of the proof is to exploit the markovian structure of joint regeneration levels. We introduce the event
	\begin{equation}\label{eqn:GoodEvent}
		E_n \coloneqq \left\{ \sup_{i = 1, \dots, n} \max\left(\Big\|\{X^{(1)}\}_{\mu_i}^{{\mu_{i+1}}} - X^{(1)}_{\mu_i}\Big\|_\infty, \Big\|\{X^{(2)}\}_{\mu_i}^{{\mu_{i+1}}}- X^{(2)}_{\mu_i}\Big\|_\infty \right) \le \frac{n^{\varepsilon}}{2} \right\}.
	\end{equation}
    Applying a union bound, Proposition~\ref{prop:ShortLevels} and Markov inequality we get that $\mathbb{P}_{0, 0}(E_n) \ge 1 - n^{-M}$ for arbitrary $M>0$. On $E_n$ we have that $\mathrm{JRLC}(n) \subseteq\mathrm{JRL}^{\le}(n)$ since the two walks cannot meet as they start from a distance larger than $n^{\varepsilon}$ and get as far as $n^{\varepsilon}/2$ from their respective starting points. To shorten our notation, let us denote
	\begin{equation*}
		\hat{\mu}^{(1)}_i(n) \coloneqq \left(\frac{\tilde{\mu}^{(1)}_{i}}{n^{1/2}} \wedge 1\right),
	\end{equation*}
    and define similarly $\hat{\mu}_i^{(2)}(n)$. Let us re-write the main quantity
	\begin{align}
			\mathbb{E}_{0, 0}\left[ \sum_{i \in \mathrm{JRLC}(n)} \hat{\mu}^{(1)}_i(n) \hat{\mu}^{(2)}_i(n) \right] 
			&= \mathbb{E}_{0, 0} \left[ \sum_{i = 1}^{n} \mathds{1}_{\{i \in \mathrm{JRLC}(n)\}} \hat{\mu}^{(1)}_i(n) \hat{\mu}^{(2)}_i(n) \right]\nonumber \\
			&= \mathbb{E}_{0, 0}\left[ \sum_{i = 1}^{n} \mathds{1}_{\{i \in \mathrm{JRLC}(n)\}} \mathds{1}_{\{i \in \mathrm{JRL}^{\le}(n)\}} \hat{\mu}^{(1)}_i(n) \hat{\mu}^{(2)}_i(n) \right]\label{eqn:TermClose}\\
			&+\mathbb{E}_{0, 0}\left[ \sum_{i = 1}^{n} \mathds{1}_{\{i \in \mathrm{JRLC}(n)\}} \hat{\mu}_i^{(1)}(n) \hat{\mu}_i^{(2)}(n)  \mathds{1}_{\{\mathrm{JRLC}(n) \not\subseteq\mathrm{JRL}^{\le}(n)\}} \right].\nonumber
	\end{align}
It is immediate to see that $\hat{\mu}_i^{(1)}(n) \hat{\mu}_i^{(2)}(n) \le 1$, hence, the second term is easily bounded
\begin{align*}
	\mathbb{E}_{0, 0}\left[ \sum_{i = 1}^{n} \mathds{1}_{\{i \in \mathrm{JRLC}(n)\}} \hat{\mu}_i^{(1)}(n) \hat{\mu}_i^{(2)}(n)  \mathds{1}_{\{\mathrm{JRLC}(n) \not\subseteq\mathrm{JRL}^{\le}(n)\}} \right]
	&\le n \mathbb{E}_{0, 0}\left[ \sum_{i = 1}^{n} \mathds{1}_{\{i \in \mathrm{JRLC}(n)\}} \mathds{1}_{\{\mathrm{JRLC}(n) \not\subseteq\mathrm{JRL}^{\le}(n)\}} \right]\\
	& \le n^2 \mathbb{P}_{0,0}\left( \mathrm{JRLC}(n) \not\subseteq\mathrm{JRL}^{\le}(n) \right)\\
	&\le n^2\mathbb{P}_{0,0}\left( E_n^c \right) \le n^{-M+2}.
\end{align*}
The term \eqref{eqn:TermClose} is more involved and requires Lemma~\ref{lemma:TailJRL}. The basic idea is to use the tower property ``backwards'' in time and exploit the fact that while the event $\mathds{1}_{\{i \in \mathrm{JRL}^{\le}(n)\}}$ depends on what happens before $\mathcal{L}_i$, the two quantities $\hat{\mu}_i^{(1)}(n) \hat{\mu}_i^{(2)}(n)$ depend on what happens after. 

Recall that $\Sigma_n$ defined in \eqref{eqn:SigmaFields} and that represents the information generated by the walk before $\mathcal{L}_n$. Using the tower property we are allowed to write
\begin{align*}
	\mathbb{E}_{0, 0}\left[ \sum_{i = 1}^{n}\mathds{1}_{\{i \in \mathrm{JRL}^{\le}(n)\}} \mathds{1}_{\{i \in \mathrm{JRLC}(n)\}} \hat{\mu}_i^{(1)}(n) \hat{\mu}_i^{(2)}(n) \right] 
	&= \mathbb{E}_{0, 0}\left[ \mathbb{E}_{0, 0}\left[ \sum_{i = 1}^{n} \mathds{1}_{\{i \in \mathrm{JRLC}(n)\}} \mathds{1}_{\{i \in \mathrm{JRL}^{\le}(n)\}} \hat{\mu}_i^{(1)}(n) \hat{\mu}_i^{(2)}(n) \big| \Sigma_n \right] \right] \\
	& \le \mathbb{E}_{0, 0}\left[ \sum_{i = 1}^{n-1} \mathds{1}_{\{i \in \mathrm{JRLC}(n)\}} \mathds{1}_{\{i \in \mathrm{JRL}^{\le}(n)\}} \hat{\mu}^{(1)}(n) \hat{\mu}^{(2)}(n)  \right]\\ &+  \mathbb{E}_{0, 0}\left[ \mathds{1}_{\{i \in \mathrm{JRL}^{\le}(n)\}} \mathbb{E}_{0, 0}\left[ \hat{\mu}_i^{(1)}(n) \hat{\mu}_i^{(2)}(n) \big| \Sigma_n \right] \right].
\end{align*}
Let us focus on the last term, applying the Markov property of regeneration levels stated in Proposition~\ref{prop:Markov} we can write it as
\begin{align*}
	\mathbb{E}_{0, 0}\left[ \sum_{i = 1}^{n-1} \mathds{1}_{\{i \in \mathrm{JRLC}(n)\}} \mathds{1}_{\{i \in \mathrm{JRL}^{\le}(n)\}} \hat{\mu}_i^{(1)}(n) \hat{\mu}_i^{(2)}(n)  \right]+  \mathbb{E}_{0, 0}\left[ \mathds{1}_{\{n \in \mathrm{JRL}^{\le}(n)\}} \mathbb{E}_{X^{(1)}_{\mu_{n}}, X^{(2)}_{\mu_{n}}}\left[ \hat{\mu}^{(1)}_1(n) \hat{\mu}_1^{(2)}(n)  \mid \mathcal{D}^{\bullet} = \infty \right] \right].
\end{align*}
Finally, applying Lemma~\ref{lemma:TailJRL} (note that the bound there works uniformly over the starting points) we get
\begin{align*}
	\mathbb{E}_{0, 0}&\left[ \sum_{i = 1}^{n}\mathds{1}_{\{i \in \mathrm{JRL}^{\le}(n)\}} \hat{\mu}_i^{(1)}(n) \hat{\mu}_i^{(2)}(n) \right]\\ 
	& \le \mathbb{E}_{0, 0}\left[ \sum_{i = 1}^{n-1} \mathds{1}_{\{i \in \mathrm{JRLC}(n)\}} \mathds{1}_{\{i \in \mathrm{JRL}^{\le}(n)\}} \hat{\mu}_i^{(1)}(n) \hat{\mu}_i^{(2)}(n)  \right]+  \mathbb{P}_{0, 0}\left(n \in \mathrm{JRL}^{\le}(n)\right)n^{-1+2\varepsilon}.
\end{align*}
Iterating this step $n$ times and applying Proposition~\ref{prop:ExpectedCrossings}, we get the upper bound for \eqref{eqn:TermClose}
\begin{equation*}
	n^{-1+2\varepsilon} \mathbb{E}_{0, 0}[|\mathrm{JRL}^{\le}(n)|] \le n^{-1/2 + 15\varepsilon}.
\end{equation*}
This concludes the proof as $\varepsilon$ is arbitrary.

\end{proof}

The next lemma provides a rigorous way to quantify the decorrelation step.

\begin{lemma}\label{lemma:TailJRL}
	There exists a positive constant $C>0$, such that, uniformly over all $U_1, U_2 \in \mathbb{V}_d$ for all $n \in \mathbb{N}$ 
	\begin{equation*}
		\mathbb{E}_{U_1, U_2}\left[ \left(\frac{\tilde{\mu}^{(1)}_{1}}{n^{1/2}} \wedge 1\right) \left(\frac{\tilde{\mu}^{(2)}_{1}}{n^{1/2}} \wedge 1\right) \Big| \mathcal{D}^{\bullet} = \infty\right]\le Cn^{2\varepsilon - 1}.
	\end{equation*}
\end{lemma}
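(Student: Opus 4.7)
The plan is to (i) decouple the two walks via Young's inequality, (ii) decompose $\tilde{\mu}_1^{(i)}$ as a sum of single-walk regeneration increments $\tilde{\tau}_k^{(i)}$, and (iii) exploit Proposition~\ref{prop:ShortLevels} to confine the number of such increments to at most $n^{\varepsilon}$ with overwhelming probability, leaving each summand with a uniform second moment supplied by the hypothesis of Theorem~\ref{theo:Main}. Concretely, Young's inequality $ab \le \tfrac12(a^2+b^2)$ reduces the claim to the one-sided estimate
\[
\mathbb{E}_{U_1, U_2}\left[\left(\frac{\tilde{\mu}_1^{(i)}}{\sqrt{n}} \wedge 1\right)^{\!2} \,\Big|\, \mathcal{D}^\bullet = \infty\right] \le C n^{2\varepsilon - 1}, \qquad i\in\{1,2\},
\]
which, by the symmetry of the setup, it suffices to prove for $i=1$.

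Using the reduction $v^* = e_j$ noted in Section~\ref{sec:RegLevels}, let $N^{(1)}$ denote the number of single-walk regeneration times of $X^{(1)}$ falling in $[0, \mu_1^{(1)}]$, so that $\tilde{\mu}_1^{(1)} = \sum_{k=1}^{N^{(1)}} \tilde{\tau}_k^{(1)}$. Because successive regeneration levels are strictly increasing integers in the $v^*$ direction (bounded steps and never backtracking),
\[
N^{(1)} \le X^{(1)}_{\mu_1^{(1)}} \cdot v^* \le \sup_{0 \le t \le \mu_1^{(1)}} \|X^{(1)}_t\|,
\]
so Proposition~\ref{prop:ShortLevels}, combined with the uniform lower bound $\mathbb{P}_{U_1, U_2}(\mathcal{D}^\bullet = \infty) \ge \eta$ and Markov's inequality, yields $\mathbb{P}_{U_1, U_2}(N^{(1)} > n^\varepsilon \mid \mathcal{D}^\bullet = \infty) \le C_M n^{-M}$ for arbitrary $M > 0$.

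I would then split the expectation according to $\{N^{(1)} \le n^\varepsilon\}$ and its complement. The complementary event contributes at most $C_M n^{-M}$ since the integrand is bounded by $1$. On $\{N^{(1)} \le n^\varepsilon\}$, Cauchy--Schwarz gives
\[
(\tilde{\mu}_1^{(1)})^2\,\mathds{1}_{\{N^{(1)} \le n^\varepsilon\}} \le N^{(1)}\!\sum_{k=1}^{N^{(1)}}(\tilde{\tau}_k^{(1)})^2 \le n^\varepsilon \sum_{k=1}^{\lceil n^\varepsilon\rceil}(\tilde{\tau}_k^{(1)})^2,
\]
and the hypothesis $\mathbb{E}_0[\tilde{\tau}_2^2]<\infty$ of Theorem~\ref{theo:Main} (together with Condition~$(T)_\gamma$ to handle the atypical first increment $\tilde{\tau}_1^{(1)}$) yields a uniform bound $\mathbb{E}_{U_1, U_2}[(\tilde{\tau}_k^{(1)})^2 \mid \mathcal{D}^\bullet = \infty] \le C$; taking expectations and dividing by $n$ produces the required $C n^{2\varepsilon - 1}$ contribution.

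The only mild obstacle is verifying that the second moment of each $\tilde{\tau}_k^{(1)}$ survives the joint conditioning $\{\mathcal{D}^\bullet = \infty\}$, which couples the two walks through the common environment. However, since this event has probability bounded below by $\eta>0$ uniformly (Proposition~\ref{prop:ShortLevels}), the conditioning inflates expectations by a factor of at most $1/\eta$, so the single-walk moment bounds -- which follow from the i.i.d.\ regeneration structure for $k\ge 2$ and from Condition~$(T)_\gamma$ for $k=1$ -- transfer directly. All remaining ingredients (Cauchy--Schwarz, Markov, and the polynomial moments on $\sup_{t \le \mu_1^{(1)}} \|X^{(1)}_t\|$) are routine.
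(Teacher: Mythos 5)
Your overall strategy matches the paper's quite closely: decouple the two walks, confine the joint regeneration increment $\tilde{\mu}^{(1)}_1$ to at most $n^{\varepsilon}$ single-walk regeneration increments on an event of overwhelming probability, and then use second moments of the increments. Young's inequality versus Cauchy--Schwarz for the decoupling is immaterial, and your bound on $N^{(1)}$ via Proposition~\ref{prop:ShortLevels} plays the same role as the paper's event $\tilde{E}_n = \{\mu_1^{(i)} \le \tau^{(i)}_{n^\varepsilon}\}$. The arithmetic at the end is also correct and yields the same exponent $2\varepsilon - 1$.

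The gap is in how you treat the first increment $\tilde{\tau}^{(1)}_1 = \tau_1^{(1)}$. You write that its second moment bound follows ``from Condition~$(T)_\gamma$,'' but Condition~$(T)_\gamma$ only controls the exponential moments of the \emph{spatial} displacement $\sup_{0 \le k \le \tau_1}\|X_k\|$; it does not imply any moment bound on the \emph{time} $\tau_1$. Indeed, under $\mathbb{P}_{U_1}$ the walk may backtrack and linger arbitrarily long before reaching its first regeneration, so $\mathbb{E}_{U_1}[\tau_1^2]$ is not controlled by the hypotheses of Theorem~\ref{theo:Main}, and the raw ``inflate-by-$1/\eta$'' argument, as you phrase it, leaves you with the unconditional $\mathbb{E}_{U_1,U_2}[(\tau_1^{(1)})^2]$, which is precisely the quantity you cannot bound. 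The correct move is to use the inclusion $\{\mathcal{D}^\bullet = \infty\} \subseteq \{D^{v^*,(1)} = \infty\}$ \emph{before} inflating, so that
\[
\mathbb{E}_{U_1, U_2}\bigl[(\tau_1^{(1)})^2 \mid \mathcal{D}^\bullet = \infty\bigr] \le \eta^{-1}\,\mathbb{E}_{U_1, U_2}\bigl[(\tau_1^{(1)})^2\,\mathds{1}_{\{D^{v^*,(1)} = \infty\}}\bigr] \le \eta^{-1}\,\mathbb{E}_{0}\bigl[\tau_1^2 \mid D^{v^*} = \infty\bigr],
\]
and the last quantity equals $\eta^{-1}\mathbb{E}_0[(\tau_2-\tau_1)^2] < \infty$ by the fundamental regeneration identity. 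This is exactly what the paper's Lemma~\ref{lemma:TailConditioning} packages: passing from the joint conditioning $\{\mathcal{D}^\bullet = \infty\}$ to the single-walk conditioning $\{D^{v^*} = \infty\}$ is the device that puts \emph{every} increment, including the first, on the same footing. Once you replace your invocation of $(T)_\gamma$ with this step, the argument becomes sound.
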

\begin{proof}
	The proof exploits the estimate of the probability of \eqref{eqn:GoodEvent}. Indeed, let $\tilde{E}_n \coloneqq \{\mu_1^{(1)} \le \tau^{(1)}_{n^{\varepsilon}}, \mu_1^{(2)} \le \tau^{(2)}_{n^{\varepsilon}}\}$. It is straightforward to show that $\mathbb{P}_{U_1, U_2}(\tilde{E}_n | \mathcal{D}^{\bullet} = \infty) \ge \mathbb{P}_{U_1, U_2}(E_n) \ge 1 -n^{-p}$, for any $p>0$.
	Hence, for any $U_1, U_2$
	\begin{align*}
		\mathbb{E}_{U_1, U_2}\left[ \left(\frac{\tilde{\mu}^{(1)}_{1}}{n^{1/2}} \wedge 1\right) \left(\frac{\tilde{\mu}^{(2)}_{1}}{n^{1/2}} \wedge 1\right) \mathds{1}_{\{\tilde{E}_n^c\}} \Big| \mathcal{D}^{\bullet} = \infty\right] \le n^{-p+1} \le n^{-\tilde{p}}.
	\end{align*}
For the other term, we apply the Cauchy-Schwarz inequality and using Lemma~\ref{lemma:TailConditioning} we obtain
\begin{align*}
	\mathbb{E}_{U_1, U_2}&\left[ \left(\frac{\tilde{\mu}^{(1)}_{1}}{n^{1/2}} \wedge 1\right) \left(\frac{\tilde{\mu}^{(2)}_{1}}{n^{1/2}} \wedge 1\right) \mathds{1}_{\{\tilde{E}_n\}} \Big| \mathcal{D}^{\bullet} = \infty\right] \\&\le c_1 \mathbb{E}_{U_1}\left[ \left(\frac{\tau^{(1)}_{n^{\varepsilon}}}{n^{1/2}} \wedge 1\right)^2 \Big| D^{v^*} = \infty\right]^{1/2} \mathbb{E}_{U_2}\left[ \left(\frac{\tau^{(2)}_{n^{\varepsilon}}}{n^{1/2}} \wedge 1\right)^2 \Big| D^{v^*} = \infty\right]^{1/2}.
\end{align*}
    Finally, on $\tilde{E}_n$ we have $\mu_1^{(1)} \le \tau^{(1)}_{n^{\varepsilon}}$ which is a sum of $n^{\varepsilon}$ i.i.d.\ random variables with finite second moment. Let us call these random variables $X_1, \dots, X_{n^{\varepsilon}}$ and their expectation $E$ for sake of notational simplicity. Hence, we get the upper bound
    \begin{align*}
    	E\left[ \left(\sum_{i = 1}^{n^{\varepsilon}}\left(\frac{X_i}{n^{1/2}} \wedge 1\right) \right)^2\right] 
    	&\le \frac{1}{n} \sum_{i = 1}^{n^{\varepsilon}} E[X_i^2] + \frac{2}{n}\sum_{i<j = 1}^{n^{\varepsilon}} E[X_i] E[X_j]\\
    	&\le n^{-1+\varepsilon} + 2 n^{-1+2\varepsilon} \le C n^{-1+2\varepsilon}.
    \end{align*}
\end{proof}
\begin{lemma}\label{lemma:TailConditioning}
	There exists an absolute constant $\rho>0$ such that, uniformly over all $U_1, U_2 \in \mathbb{V}_d$ we have that, 	for all $p \in (0, 2]$
	\begin{equation*}
		\mathbb{E}_{U_1, U_2}\left[ (\tau_{1}^{(1)})^p \, \big| \, \mathcal{D}^\bullet = \infty \right] \le \rho \mathbb{E}_{U_1}\left[ \tau_{1}^p \, \big| \, D^{v^*} = \infty \right].
	\end{equation*}
\end{lemma}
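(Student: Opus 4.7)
The plan is to prove the lemma by a direct conditioning argument that combines the conditional independence of the two walks given the environment with the uniform lower bound on the joint non-backtracking probability.

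First I would write the conditional expectation as a ratio:
\begin{equation*}
\mathbb{E}_{U_1, U_2}\!\left[(\tau_1^{(1)})^p \,\big|\, \mathcal{D}^\bullet = \infty\right] = \frac{\mathbb{E}_{U_1, U_2}\!\left[(\tau_1^{(1)})^p \mathbf{1}_{\{\mathcal{D}^\bullet = \infty\}}\right]}{\mathbb{P}_{U_1, U_2}(\mathcal{D}^\bullet = \infty)}.
\end{equation*}
The denominator is uniformly bounded below by the constant $\eta > 0$ of Proposition~\ref{prop:ShortLevels}, and this bound is independent of $U_1, U_2 \in \mathbb{V}_d$. This is the one external input I need.

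For the numerator, I would exploit that $\tau_1^{(1)}$ and the event $\{D^{v^*,(1)} = \infty\}$ depend only on $X^{(1)}$ (and the environment), while $\{D^{v^*,(2)} = \infty\}$ depends only on $X^{(2)}$ (and the environment). Under the quenched law $P^\omega_{U_1, U_2}$ the two walks are independent by definition, so the quenched expectation factorises:
\begin{equation*}
E^\omega_{U_1, U_2}\!\left[(\tau_1^{(1)})^p \mathbf{1}_{\{\mathcal{D}^\bullet = \infty\}}\right] = E^\omega_{U_1}\!\left[\tau_1^p \mathbf{1}_{\{D^{v^*} = \infty\}}\right] \cdot P^\omega_{U_2}\!\left(D^{v^*} = \infty\right).
\end{equation*}
Since $P^\omega_{U_2}(D^{v^*} = \infty) \le 1$ pointwise in $\omega$, taking $\mathbf{E}$ of both sides gives
\begin{equation*}
\mathbb{E}_{U_1, U_2}\!\left[(\tau_1^{(1)})^p \mathbf{1}_{\{\mathcal{D}^\bullet = \infty\}}\right] \le \mathbb{E}_{U_1}\!\left[\tau_1^p \mathbf{1}_{\{D^{v^*} = \infty\}}\right] = \mathbb{P}_{U_1}(D^{v^*} = \infty)\, \mathbb{E}_{U_1}\!\left[\tau_1^p \,\big|\, D^{v^*} = \infty\right].
\end{equation*}

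Combining the two steps and bounding $\mathbb{P}_{U_1}(D^{v^*} = \infty) \le 1$ yields the desired inequality with $\rho = 1/\eta$, uniformly in $U_1, U_2 \in \mathbb{V}_d$. I do not expect any real obstacle: the argument never uses the specific value of $p$ beyond requiring the right-hand side be meaningful, so the restriction $p \in (0, 2]$ in the statement appears only because this is the range in which the right-hand side is known to be finite (indeed, guaranteed by the hypothesis $\mathbb{E}_0[(\tau_2 - \tau_1)^2] < \infty$ of Theorem~\ref{theo:Main}), and it is exactly the range needed by the application in Lemma~\ref{lemma:TailJRL}.
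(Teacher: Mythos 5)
Your proof is correct and follows essentially the same route as the paper: both hinge on writing the conditional expectation as a ratio, lower-bounding the denominator by $\eta$ from Proposition~\ref{prop:ShortLevels}, and discarding the constraint coming from the second walk. The only cosmetic difference is that you discard it via the quenched factorisation $E^\omega_{U_1,U_2}[\,\cdot\,] = E^\omega_{U_1}[\,\cdot\,]\,P^\omega_{U_2}(D^{v^*}=\infty)$ followed by $P^\omega_{U_2}(D^{v^*}=\infty)\le1$, whereas the paper uses the equivalent pointwise inclusion $\{\mathcal{D}^\bullet=\infty\}\subseteq\{D^{v^*,(1)}=\infty\}$; both then identify the marginal of $X^{(1)}$ under $\mathbb{P}_{U_1,U_2}$ with $\mathbb{P}_{U_1}$.
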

\begin{proof}
	For brevity's sake we denote $(\tau_{1}^{(1)}) = \tau_{1}$. This result follows by conditioning and the fact that $\{\mathcal{D}^\bullet = \infty\} \subseteq \{D^{v^*} = \infty\}$. Using Proposition~\ref{prop:ShortLevels}
	\begin{align*}
		\mathbb{E}_{U_1, U_2}\left[ \tau_{1}^p \, \big| \, \mathcal{D}^\bullet = \infty \right] 
		&= \frac{\mathbb{E}_{U_1, U_2}\left[ \tau_{1}^p \mathds{1}_{\{\mathcal{D}^\bullet  = \infty\}} \right]}{\mathbb{P}_{U_1, U_2}(\mathcal{D}^\bullet  = \infty)}\le \eta^{-1} \mathbb{E}_{U_1, U_2}\left[ \tau_{1}^p \mathds{1}_{\{D^{v^*}  = \infty\}} \right]\\
		& \le \eta^{-1} \mathbb{E}_{U_1, U_2}\left[ \tau_{1}^p \, \big| \, D^{v^*} = \infty \right].
    \end{align*}
Finally, we observe that the marginal of $X^{(1)}$ under $\mathbb{P}_{U_1, U_2}$ is the distribution of $X$ under $\mathbb{P}_{U_1}$.
\end{proof}

The following two results deal with the slightly different behaviour that the first regeneration time and the first regeneration level have w.r.t.\ all other regeneration periods.

\begin{lemma}\label{lemma:Extra1}There exists a positive constant $C>0$, such that
	\begin{equation*}
		\mathbf{E}\left[ E_{0, 0}^\omega\left[ \left(\frac{\mu^{(1)}_{1} - \tau^{(1)}_1}{n^{1/2}} \wedge 1\right) \left(\frac{\mu^{(2)}_{1} - \tau^{(2)}_1}{n^{1/2}} \wedge 1\right) \right]\right]\le Cn^{2\varepsilon - 1}.
	\end{equation*}
\end{lemma}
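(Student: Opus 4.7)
The plan is to mimic the skeleton of the proof of Lemma~\ref{lemma:TailJRL}: first use Cauchy--Schwarz together with the symmetry of the two walks under $\mathbb{P}_{0,0}$ to reduce to a single second moment, and then split on a good event restricting the sum to polynomially many regeneration increments. Since $\mathbf{E}[E^\omega_{0,0}[\,\cdot\,]] = \mathbb{E}_{0,0}[\,\cdot\,]$ and $X^{(1)}, X^{(2)}$ are exchangeable under $\mathbb{P}_{0,0}$, Cauchy--Schwarz reduces the claim to
\begin{equation*}
\mathbb{E}_{0,0}\!\left[\Bigl(\tfrac{\mu^{(1)}_{1} - \tau^{(1)}_{1}}{n^{1/2}}\wedge 1\Bigr)^{2}\right] \le C n^{2\varepsilon - 1}.
\end{equation*}

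Next I would introduce $K_{1}$, the number of regenerations of $X^{(1)}$ falling in $(0, \mu^{(1)}_{1}]$, so that $\mu^{(1)}_{1} = \tau^{(1)}_{K_{1}}$ and $\mu^{(1)}_{1} - \tau^{(1)}_{1} = \sum_{k=2}^{K_{1}}\tilde\tau^{(1)}_{k}$. Since the paper has reduced to $v^{*} = e_{i}$, consecutive regeneration levels are strictly increasing integers, yielding the pointwise inequality $K_{1} \le X^{(1)}_{\mu^{(1)}_{1}} \cdot v^{*} \le \sup_{0 \le t \le \mu^{(1)}_{1}} \|X^{(1)}_{t}\|$. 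Applying Proposition~\ref{prop:ShortLevels} at the admissible starting point $(0,0) \in \mathbb{V}_{d}\times\mathbb{V}_{d}$ gives all polynomial moments of this supremum, and Markov's inequality then produces the good-event bound $\mathbb{P}_{0,0}(K_{1} > n^{\varepsilon}) \le n^{-M}$ for arbitrary $M > 0$.

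On the good event $\{K_{1} \le n^{\varepsilon}\}$, I would use the elementary inequality $(a+b)\wedge 1 \le (a\wedge 1) + (b\wedge 1)$ to dominate
\begin{equation*}
\Bigl(\tfrac{\mu^{(1)}_{1} - \tau^{(1)}_{1}}{n^{1/2}} \wedge 1\Bigr) \;\le\; \sum_{k=2}^{n^{\varepsilon}+1} \Bigl(\tfrac{\tilde\tau^{(1)}_{k}}{n^{1/2}} \wedge 1\Bigr).
\end{equation*}
The marginal of $X^{(1)}$ under $\mathbb{P}_{0,0}$ is $\mathbb{P}_{0}$, under which $\{\tilde\tau^{(1)}_{k}\}_{k \ge 2}$ is i.i.d.\ with the finite second moment assumed in Theorem~\ref{theo:Main}. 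Expanding the square and estimating diagonal and off-diagonal terms exactly as in the closing display of the proof of Lemma~\ref{lemma:TailJRL} yields the bound $C n^{-1 + 2\varepsilon}$; off the good event the integrand is at most $1$ and contributes $n^{-M}$, which is negligible.

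The main subtlety I anticipate is conceptual rather than computational: starting from $(0,0)$, one is tempted to first invoke the joint regeneration Markov property (Proposition~\ref{prop:Markov}) at $(\tau^{(1)}_{1}, \tau^{(2)}_{1})$ in order to reduce to the situation of Lemma~\ref{lemma:TailJRL}, but the positions $X^{(1)}_{\tau^{(1)}_{1}}$ and $X^{(2)}_{\tau^{(2)}_{1}}$ typically sit at different $v^{*}$-levels, so this Markov reduction is not directly available. The plan above bypasses that issue entirely by exploiting that $(0,0)$ already lies in $\mathbb{V}_{d}\times\mathbb{V}_{d}$, so Proposition~\ref{prop:ShortLevels} applies verbatim and only the single-walk i.i.d.\ structure of the $\tilde\tau^{(1)}_{k}$ enters the final estimate.
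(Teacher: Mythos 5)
Your argument is correct and follows essentially the same route as the paper's proof of Lemma~\ref{lemma:Extra1}: Cauchy--Schwarz together with exchangeability to pass to a single-walk second moment, a good event equivalent to the paper's $\tilde E_n$ (your $\{K_1 \le n^{\varepsilon}\}$ is $\{\mu_1^{(1)} \le \tau^{(1)}_{n^{\varepsilon}}\}$ rephrased, since $\mu_1^{(1)} = \tau^{(1)}_{K_1}$), and the closing expansion of the square of a sum of $n^{\varepsilon}$ i.i.d.\ increments with finite second moment under $\mathbb{P}_0$. Your explicit bound for the bad event, via $K_1 \le X^{(1)}_{\mu_1^{(1)}}\cdot v^* \le \sup_{0\le t\le \mu_1^{(1)}}\|X^{(1)}_t\|$ combined with Proposition~\ref{prop:ShortLevels} (valid because the start $(0,0)$ lies in $\mathbb{V}_d\times\mathbb{V}_d$), just makes precise what the paper leaves implicit through its reference to the event $E_n$ from Lemma~\ref{lemma:TailJRL}.
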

\begin{proof}
	The proof follows the same lines as the proof of Lemma~\ref{lemma:TailJRL}, we only sketch it. Let $\tilde{E}_n \coloneqq \{\mu_1^{(1)} \le \tau^{(1)}_{n^{\varepsilon}}, \mu_1^{(2)} \le \tau^{(2)}_{n^{\varepsilon}}\}$. As in Lemma~\ref{lemma:TailJRL}, we get $\mathbb{P}_{0, 0}(\tilde{E}_n) \ge \mathbb{P}_{0, 0}(E_n) \ge 1 -n^{-p}$.
	Hence,
	\begin{align*}
		\mathbb{E}_{0, 0}\left[\left(\frac{\mu^{(1)}_{1} - \tau^{(1)}_1}{n^{1/2}} \wedge 1\right) \left(\frac{\mu^{(2)}_{1} - \tau^{(2)}_1}{n^{1/2}} \wedge 1\right) \mathds{1}_{\{\tilde{E}_n^c\}} \right] \le n^{-p+1} \le n^{-\tilde{p}}.
	\end{align*} 
    Furthermore, 
\begin{align*}
	\mathbb{E}_{0, 0}&\left[\left(\frac{\mu^{(1)}_{1} - \tau^{(1)}_1}{n^{1/2}} \wedge 1\right) \left(\frac{\mu^{(2)}_{1} - \tau^{(2)}_1}{n^{1/2}} \wedge 1\right) \mathds{1}_{\{\tilde{E}_n^c\}} \right] \\
	&\le c_1 \mathbb{E}_{0}\left[ \left(\frac{\tau^{(1)}_{n^{\varepsilon}} - \tau^{(1)}_{1}}{n^{1/2}} \wedge 1\right)^2 \right]^{1/2} \mathbb{E}_{0}\left[ \left(\frac{\tau^{(2)}_{n^{\varepsilon}} - \tau^{(2)}_{1}}{n^{1/2}} \wedge 1\right)^2\right]^{1/2}.
\end{align*}
We know from their definition that $\tau^{(1)}_{n^{\varepsilon}} - \tau^{(1)}_{1} = \sum_{i = 2}^{n^\varepsilon} \tilde{\tau}_i$ is a sum of $n^\varepsilon$ i.i.d.\ random variables with second moment. Following the final steps of the proof of Lemma~\ref{lemma:TailJRL} verbatim gives the result.
\end{proof}

\begin{lemma}\label{lemma:Extra2}
	We have that
	\begin{equation*}
		\sup_{0 \le z\cdot v^* \le n^{\varepsilon}} \mathbb{E}_0[ T_{z\cdot v^*}^2| D^{v^*} \ge T_{z\cdot v^*}] \le Cn^{\varepsilon}.
	\end{equation*}
\end{lemma}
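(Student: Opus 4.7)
My plan is to eliminate the conditioning using a direct lower bound on the normalizing probability, then control $\mathbb{E}_0[T_m^2]$ (with $m := z\cdot v^*$) via regeneration. By directional transience and Proposition~\ref{prop:ShortLevels} applied to a single walk, $\mathbb{P}_0(D^{v^*} = \infty) \ge \eta > 0$; the inclusion $\{D^{v^*} = \infty\} \subseteq \{D^{v^*} \ge T_m\}$ gives $\mathbb{P}_0(D^{v^*} \ge T_m) \ge \eta$ uniformly in $m$, so
\[
\mathbb{E}_0\bigl[T_m^2 \bigm| D^{v^*} \ge T_m\bigr] \le \eta^{-1}\,\mathbb{E}_0[T_m^2].
\]
The problem thus reduces to an unconditional second-moment bound uniform in $m \le n^\varepsilon$.

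For the unconditional bound I would use the regeneration structure. Set $N(m) := \inf\{k \ge 1 : X_{\tau_k}\cdot v^* \ge m\}$; by construction $T_m \le \tau_{N(m)} = \tau_1 + \sum_{k=2}^{N(m)} \tilde{\tau}_k$, since the walk has reached level $m$ no later than the first regeneration point that lies above $m$. Under condition $(T)_\gamma$, $X_{\tau_1}\cdot v^*$ has all exponential moments, and the hypothesis $\mathbb{E}_0[\tilde\tau_2^2]<\infty$ of Theorem~\ref{theo:Main} gives each $\tilde\tau_k$ a second moment. A renewal-type Wald inequality together with the Cauchy--Schwarz inequality produces $\mathbb{E}_0[\tau_{N(m)}^2] \le Cm^2$, using that $N(m)$ concentrates at scale $m/\mathbb{E}_0[X_{\tau_2}\cdot v^* - X_{\tau_1}\cdot v^*]$. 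Combined with step one this yields
\[
\mathbb{E}_0\bigl[T_m^2 \bigm| D^{v^*} \ge T_m\bigr] \le Cm^2 \le Cn^{2\varepsilon}.
\]

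The main obstacle is the gap between this natural $Cn^{2\varepsilon}$ estimate and the stated $Cn^\varepsilon$. The latter is in fact unachievable: condition (S) gives the deterministic lower bound $T_m \ge m/r_0$, whence $\mathbb{E}_0[T_m^2 \mid D^{v^*} \ge T_m] \ge m^2/r_0^2$, which exceeds any constant multiple of $m$ for large $m$ and thus falsifies a bound of the form $Cn^\varepsilon$ at $m = n^\varepsilon$. I therefore read the exponent $\varepsilon$ on the right-hand side of Lemma~\ref{lemma:Extra2} as a typographical slip that should read $2\varepsilon$; the plan above proves this intended version. Its use in Proposition~\ref{prop:VarDecay} is unaffected, as $n^{d\varepsilon - 1}\cdot n^{2\varepsilon} = n^{(d+2)\varepsilon - 1}$ remains negligible for $\varepsilon$ small and is absorbed into the claimed $Cn^{-1+2d\varepsilon}$ there.
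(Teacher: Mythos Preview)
Your diagnosis of the exponent is correct: the deterministic lower bound $T_m \ge m/r_0$ forces $\mathbb{E}_0[T_m^2 \mid D^{v^*}\ge T_m] \ge m^2/r_0^2$, so the stated $Cn^\varepsilon$ cannot hold and should read $Cn^{2\varepsilon}$. The paper's own proof confirms this: it ends with ``$T_{n^\varepsilon}^2 \le \tau_{n^\varepsilon}$'' (itself a slip for $T_{n^\varepsilon} \le \tau_{n^\varepsilon}$) and then refers to the final computation in Lemma~\ref{lemma:TailJRL}, which bounds the second moment of a sum of $n^\varepsilon$ i.i.d.\ variables with finite second moment and hence yields $Cn^{2\varepsilon}$. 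As you say, the application in Proposition~\ref{prop:VarDecay} is unaffected.

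The approaches differ in how the conditioning is handled. You bound the denominator by $\eta$ and drop the indicator, reducing to the unconditioned $\mathbb{E}_0[T_m^2]$. The paper does not drop the conditioning but \emph{converts} it: after bounding the denominator by $\mathbb{P}_0(D^{v^*}=\infty)$, it multiplies and divides the numerator by the translation-invariant constant $\mathbb{P}_0(D^{v^*}=\infty)=\mathbb{P}_{X_{T_m}}(D^{v^*}=\infty)$, brings this inside the expectation (the half-space above $X_{T_m}$ is unexplored on $\{D^{v^*}\ge T_m\}$), and uses $\{D^{v^*}\ge T_m\}\cap\{D^{v^*}\circ\theta_{T_m}=\infty\}\subseteq\{D^{v^*}=\infty\}$ to arrive at
\[
\mathbb{E}_0\bigl[T_m^2 \bigm| D^{v^*}\ge T_m\bigr]\;\le\;\eta^{-1}\,\mathbb{E}_0\bigl[T_m^2 \bigm| D^{v^*}=\infty\bigr].
\]
This buys something concrete: under $\mathbb{P}_0(\,\cdot\mid D^{v^*}=\infty)$ one has $T_m\le\tau_m$ (each regeneration advances at least one level), and $\tau_m$ is a sum of $m$ i.i.d.\ copies of $\tilde\tau_2$, whose second moment is finite \emph{by hypothesis}. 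Your route, by contrast, needs $\mathbb{E}_0[\tau_1^2]<\infty$ under the unconditioned $\mathbb{P}_0$, since $\tau_1$ sits in your decomposition $\tau_{N(m)}=\tau_1+\sum_{k=2}^{N(m)}\tilde\tau_k$; only $\mathbb{E}_0[\tilde\tau_2^2]<\infty$ is assumed, and while a bound on $\tau_1$ can plausibly be extracted from the regeneration construction, it is not immediate and you do not supply it. The paper's trick sidesteps this loose end entirely.
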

\begin{proof}
	We show the proof for $z\cdot v^* = n^\varepsilon$, the proof for all other values in the range considered follows the same steps. We begin by noticing the fact that $\{D^{v^*} \ge T_{n^{\varepsilon}}\}$ is measurable w.r.t.\ 
	\begin{equation*}
		\sigma\left(\{X_n^{(1)}\}_{n = 0}^{T_{n^{\varepsilon}}} \cup \{\omega_x \colon x \in \{X_n^{(1)}\}_{n = 0}^{T_{n^{\varepsilon}}}\}\right).
	\end{equation*}
    We can write
    \begin{align*}
    	\mathbb{E}_0[ T_{n^{\varepsilon}}^2| D^{v^*} \ge T_{n^{\varepsilon}}] & = \frac{\mathbb{E}_0[ T_{n^{\varepsilon}}^2 \mathds{1}_{\{D^{v^*} \ge T_{n^{\varepsilon}}\}} ]}{\mathbb{P}_0(D^{v^*} \ge T_{n^{\varepsilon}})}.
    \end{align*}
    From inclusion of the events we have that $\mathbb{P}_0(D^{v^*} \ge T_{n^{\varepsilon}}) \ge \mathbb{P}_0(D^{v^*} = \infty)$. We obtain 
	\begin{align*}
   		\mathbb{E}_0[ T_{n^{\varepsilon}}^2| D^{v^*} \ge T_{n^{\varepsilon}}] 
   		\le \frac{\mathbb{E}_0[ T_{n^{\varepsilon}}^2\mathds{1}_{\{D^{v^*} \ge T_{n^{\varepsilon}}\}}]}{\mathbb{P}_0(D^{v^*} = \infty)}= \frac{\mathbb{E}_0[ T_{n^{\varepsilon}}^2\mathds{1}_{\{D^{v^*} \ge T_{n^{\varepsilon}}\}} \mathbb{P}_{X_{T_{n^{\varepsilon}}}}(D^{v^*} = \infty)]}{\mathbb{P}_0(D^{v^*} = \infty)^2}.
   	\end{align*}
    We note $\mathbb{P}_{0}(D^{v^*} = \infty)$ is invariant under translation of the starting point and hence the step above is justified since $\mathbb{P}_{X_{T_{n^{\varepsilon}}}}(D^{v^*} = \infty)$ is a constant random variable. We observe that $\mathbb{P}_{X_{T_{n^{\varepsilon}}}}(D^{v^*} = \infty)$ depends only on the environment to the right of $X_{T_{n^{\varepsilon}}}$. Thus,
    \begin{equation*}
    	\mathbb{E}_0[ T_{n^{\varepsilon}}^2\mathds{1}_{\{D^{v^*} \ge T_{n^{\varepsilon}}\}} \mathbb{P}_{X_{T_{n^{\varepsilon}}}}(D^{v^*} = \infty)] = \mathbb{E}_0[ T_{n^{\varepsilon}}^2\mathds{1}_{\{D^{v^*} \ge T_{n^{\varepsilon}}, D^{v^*}\circ \theta_{T_{n^{\varepsilon}}} = \infty\}} ].
    \end{equation*}
Noting that $\{D^{v^*} \ge T_{n^{\varepsilon}},  D^{v^*}\circ \theta_{T_{n^{\varepsilon}}}\}\subseteq  \{D^{v^*} = \infty\}$ and recalling that $\mathbb{P}_0(D^{v^*} = \infty)\ge \eta > 0$ we are finally able to write
\begin{equation*}
	\mathbb{E}_0[ T_{n^{\varepsilon}}^2| D^{v^*} \ge T_{n^{\varepsilon}}] \le \frac{1}{\eta} \mathbb{E}_0[ T_{n^{\varepsilon}}^2| D^{v^*} =\infty].
\end{equation*}
To end the proof, one observes that $T_{n^{\varepsilon}}^2\le \tau_{n^\varepsilon}$ and follows once again the final steps of the proof of Lemma~\ref{lemma:TailJRL}.
\end{proof}

\subsubsection*{Acknowledgments}

I thank Noam Berger for several useful discussions on \cite{Berger_Zeitouni} and this topic in general. I thank U.\ De Ambroggio and T.\ Lions for helping developing some of the ideas presented in this work during several discussions.

\printbibliography
\end{document}